\documentclass[12pt]{amsart}
\usepackage{graphicx}
\usepackage[all]{xy}
\usepackage{amscd}
\usepackage{amssymb}
\usepackage{amsmath}
\usepackage{amsthm}
\usepackage{amsfonts}
\usepackage{graphics}
\usepackage{epsfig,psfrag}
\usepackage[english]{babel}
\usepackage{latexsym}
\usepackage{mathrsfs}
\usepackage{color}
\usepackage{url}

\usepackage{hyperref}

\newtheorem{theorem}{Theorem}[section]
\newtheorem{corollary}[theorem]{Corollary}
\newtheorem{lemma}[theorem]{Lemma}

\newtheorem{proposition}[theorem]{Proposition}
\newtheorem{question}[theorem]{Question}

\theoremstyle{definition}

\newtheorem{definition}[theorem]{Definition}


\newcommand{\ZZ}{{\mathbb Z}}

\theoremstyle{remark}
\newtheorem{remark}[theorem]{Remark}



\begin{document}
\title[ Degree one maps on four manifolds with cyclic $\pi_1$]
{Degree one maps on four manifolds with cyclic fundamental groups}

\author{Yang Su}
\address{School of Mathematics and Systems Science, Chinese Academy of Sciences, Beijing 100190, China}
\address{School of Mathematical Sciences, University of Chinese Academy of Sciences, Beijing 100049, China}
\email{suyang@math.ac.cn}

\author{Shicheng Wang}
\address{School of Mathematical Sciences, Peking University, Beijing, 100871, CHINA}
\email{wangsc@math.pku.edu.cn}

\author{Zhongzi Wang}
\address{School of Mathematical Sciences, Peking University, Beijing, 100871, CHINA}
\email{wangzz22@stu.pku.edu.cn}

\begin{abstract}
We study degree one maps between closed orientable 4-manifolds with cyclic $\pi_1$, and obtain some results on the existence and finiteness, as well as  some relation of $1$-domination and Euler Characteristics.
\end{abstract}  
\date{}
\maketitle
\tableofcontents
\section{Introduction}

\subsection{Description of main results} 
For a finite CW-complex $X$, we use $H_i(X)$, $H^i(X)$, $\beta_i(X)$,
$\pi_i(X)$, $\chi(X)$ to denote its $i$-th  integral homology group, $i$-th integral cohomology group,  $i$-th Betti number, $i$-th homotopy group, and 
Euler Characteristic respectively. For  a closed oriented 4-manifold $X$, we use  $I_X$, $\sigma(X)$, $w_2(X)$ and $I(X, \ZZ_2)$, and $ks(X)$ to denote its integral intersection form,   signature,  second Stiefel-Whitney class,  $\ZZ_2$-intersection form,
and Kirby-Siebenmann invariant respectively. 
For a finitely presented group $G$, we use $G^{ab}$ to denote its the abelianization,  and $\beta_1(G)$ to denote the rank of the free part
of $G^{ab}$.
For  terminologies in 4-manifolds appeared above and after, see \cite{Kir}.

\begin{definition}  Suppose $X$ and $Y$ are closed oriented $n$-manifolds.
Call $X$ {\it 1-dominates} $Y$ if there exists a  map  $f: X\to Y$ of  degree one.
When $n=4$,  call $X$ stably 1-dominates $Y$, if  $X\# W$
1-dominates $Y\# W$, where $W=\#_{k}S^2\times S^2$ for some $k$.
We will use ``degree one map" and ``1-domination" both in the coming context.
\end{definition}

Degree one maps  between closed orientable $n$-manifolds is an old and interesting topic.
Results on this topic usually relies on the understanding of the manifolds under discussion.
For examples, when there are degree one  maps between closed orientable 2-manifolds is known and is based on the classification of closed orientable 2-manifolds.
With Thurston's picture on 3-manifolds, many results on finiteness, existence, and rigidity  about degree one maps between closed  orientable 3-manifolds  
appeared in last several decades,  see  
\cite{BRW}, \cite{Liu} and the references therein.
With the classification of simply connected 4-manifolds due to Freedman \cite{Fr}, some results on the existence and finiteness about 1-domination
on simply connected 4-manifolds follow quickly \cite{DW}: 
\begin{enumerate}
\item For closed simply connected 4-manifolds $X$ and $Y$, $X$ 1-dominates  $Y$ if and only if their intersection forms
satisfy  $I_X=I_Y \oplus H$. 

\item Every closed simply connected 4-manifold 1-dominates at most finitely many closed orientable 4-manifolds.
\end{enumerate}

On the other hand, after Freedman's work \cite{Fr} there are classification results of $4$-manifolds with non-trivial fundamental groups, see \cite{FQ}, \cite{HK1}, \cite{HK2},
\cite{KLPT} and the references therein. Based on these results,
in this note we systematically study degree one maps between  closed orientable 4-manifolds with cyclic fundamental groups, which is the best  understood class of 4-manifolds after the simply connected case. The remaining part of this subsection is a summary of the results in this note. 

We first prove some results analogous to  (1)  and (2) above:

(1)  Suppose  $X$ and $Y$ are closed orientable 4-manifolds  with $\pi_1(X)=\pi_1(Y)$, $I_X=I_Y \oplus H$ and $I(X, \ZZ_2)=I(Y, \ZZ_2)\oplus L$.
 If $\pi_1=\mathbb{Z}$,
 then $X$ stably 1-dominates $Y$, but not necessarily 1-dominates $Y$ (Theorems  \ref{stably 1-dom} and \ref{non 1-dom});
 If $\pi_1=\mathbb{Z}_n$, then  $X$ 1-dominates $Y$ unless the following case:
 $n$  is even, $X$ is almost spin, and $Y$ is spin  (Theorem \ref{1-dom n}).
 
(2) For any given $n\ge 0$, there exists a closed orientable 4-manifold  that 1-dominates all closed orientable 4-manifolds $Y$ with $\pi_1(Y)=\ZZ$ and $\beta_2(Y) \le n$, and every closed orientable 4-manifold $X$  1-dominates at most finitely many stable homeomorphism classes of closed orientable 4-manifolds $Y$ with $\pi_1(Y)=\mathbb{Z}$ (Theorems  \ref{infinite cyclic} and \ref{cyclic-stable}).
Every closed orientable 4-manifold $X$  1-dominates at most finitely many closed orientable 4-manifolds $Y$ with finite abelian $\pi_1$ (Theorem \ref{finite-abelian}).

We also address the relations between 1-domination and Euler characteristic.
Recall  that for a finitely presented group $G$,  its \emph{$4$-manifold Euler characteristic}  is defined by  \cite{HW}
$$\chi_4(G)=\text{inf}\{\chi(X)| \text{$X$ is a closed orientable 4-manifold and $\pi_1(X)\cong G$}\}.$$ 
This invariant of groups is  extensively studied  when $G\cong \pi_1(Q)$  for compact 3-manifolds $Q$, \cite{Ko}, \cite{Hi1}, \cite{KL}, \cite{SunW1}. 
Call a closed orientable 4-manifold $X$ realizing $G$ if $\pi_1(X) \cong G$ and $\chi(X)=\chi_4(G)$. Both the Euler characteristic and the relation of 1-domination  measure the complexity of 4-manifolds with a given fundamental group.
People asked \cite{SunW2} if $N$  is a closed orientable 4-manifold
with $\pi_1(N)\cong G$,  does $N$ 1-dominate a 4-manifold $X$ realizing $\chi_4(G)$? 
We will give a positive answer to this question when $G$ is  cyclic (Theorem \ref{answer}).  

The proofs of above results are quite different for $\pi_1=\ZZ$ and $\pi_1=\ZZ _n$, which  rely on the classification results of Freedman-Quinn \cite{FQ} and a decomposition theorem of Hambleton-Teichner \cite{HT} for $\pi_1=\ZZ$,  
and the classification results of  Hambleton-Kreck \cite{HK1}, \cite{HK2} and a decomposition theorem (Theorem \ref{decomposition}) derived from the classification results for $\pi_1=\ZZ _n$.


\subsection{Some elementary facts and classification results}

First we state a primary  fact for algebraic topology on 4-manifolds.
 
\begin{lemma}\label{Euler}  Suppose $X$ is a closed oriented 4-manifold.

(1) Poincare duality: $\beta_1(X)=\beta_3(X)$.

(2) Euler-Poincare formula: $\chi(X)=2-2\beta_1(X)+\beta_2(X).$
\end{lemma}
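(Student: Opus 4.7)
The plan is to deduce both statements from standard algebraic topology, namely Poincar\'e duality for closed orientable manifolds, the universal coefficient theorem, and the general Euler--Poincar\'e formula $\chi(X)=\sum_i (-1)^i \beta_i(X)$. Implicitly we assume $X$ is connected, so that $\beta_0(X)=1$ and (using orientability) $\beta_4(X)=1$; otherwise one simply sums the formula over components.

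For part (1), I would pass to rational coefficients to avoid torsion. Poincar\'e duality with $\QQ$ coefficients gives an isomorphism $H^{4-i}(X;\QQ)\cong H_i(X;\QQ)$, and the universal coefficient theorem over the field $\QQ$ gives $H^i(X;\QQ)\cong H_i(X;\QQ)$. Combining these identifications for $i=1$ yields $H_1(X;\QQ)\cong H_3(X;\QQ)$, i.e.\ $\beta_1(X)=\beta_3(X)$.

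For part (2), the Euler--Poincar\'e formula applied to $X$ gives
\[
\chi(X)=\beta_0(X)-\beta_1(X)+\beta_2(X)-\beta_3(X)+\beta_4(X).
\]
Using $\beta_0(X)=\beta_4(X)=1$ and substituting $\beta_3(X)=\beta_1(X)$ from part (1), this simplifies to $\chi(X)=2-2\beta_1(X)+\beta_2(X)$.

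There is no real obstacle here: the lemma is labelled as a primary fact and is just a direct bookkeeping consequence of Poincar\'e duality and the Euler--Poincar\'e formula. The only point that deserves a moment of care is that Poincar\'e duality is being used with integral or rational coefficients (not $\ZZ_2$), which is why orientability of $X$ is needed to force $\beta_4(X)=1$ and to have the duality isomorphism in the stated form.
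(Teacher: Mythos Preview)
Your argument is correct and is exactly the standard justification one would give. The paper itself does not prove this lemma---it is stated as a ``primary fact'' and left without proof---so there is nothing further to compare.
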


Then state a couple of known facts about 1-dominations.

 \begin{lemma}\label{pinch} Suppose $X, Y, X', Y'$ are closed oriented n-manifolds.
 
(1) The connected sum  $X\# Y$ 1-dominates  $X$.

(2) If $X$ 1-dominates $Y$, $X'$ 1-dominates $Y'$, then $X\# Y$ 1-dominates $X'\# Y'$.
\end{lemma}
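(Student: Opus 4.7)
For part (1) the plan is to use the standard pinch map. Decompose $X \# Y = (X \setminus \operatorname{int} B_X) \cup_{S^{n-1}} (Y \setminus \operatorname{int} B_Y)$ for small open $n$-balls $B_X \subset X$ and $B_Y \subset Y$, and define $\pi \co X \# Y \to X$ as the quotient that collapses the $Y$-piece to the center of the filled-in ball $B_X \subset X$. Equivalently, $\pi$ is the inclusion on $X \setminus \operatorname{int} B_X$ and constant on $Y \setminus \operatorname{int} B_Y$. To verify $\deg \pi = 1$, pick any point $q$ in the interior of $X \setminus B_X$: its preimage under $\pi$ is the single point $q$ (viewed in the $X$-piece of $X \# Y$), and a small disk neighborhood of $q$ in $X \# Y$ is mapped homeomorphically and orientation-preservingly onto a disk neighborhood of $q$ in $X$. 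Thus the local degree at $q$ is $+1$, and $\deg \pi = 1$.

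For part (2), interpreted in the natural way as the statement that a connected sum of degree one maps is degree one, the plan is to build $f \# g$ from the given degree one maps $f$ and $g$. The first step is to normalize each map on a small ball: after a homotopy I may assume there is an embedded $n$-ball $D \subset X$ that $f$ sends homeomorphically and orientation-preservingly onto an embedded ball $f(D)$ in the target, and similarly for $g$. Now remove the interiors of these balls and glue along the resulting boundary spheres to form the connected sums on both sides; since $f$ and $g$ agree with homeomorphisms along the glued spheres, they combine into a single continuous map $f \# g$ between the connected sums. The degree is read off by choosing a regular value in the interior of the complement of $f(D)$ inside the target connected sum: its only preimage lies in the unmodified part of the domain, where $f$ had degree one, so $\deg(f \# g) = 1$.

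The only non-formal step is the normalization in part (2). In the smooth or piecewise-linear category it is immediate from transversality at a regular value of $f$, followed by a local adjustment that removes any excess sheets over that value. In the topological category, one uses that a degree one map between closed oriented $n$-manifolds is surjective, picks a preferred preimage of a chosen target point, and applies a local homotopy supported near the remaining preimages (pushing them off) to straighten $f$ into a homeomorphism on a small ball around the preferred preimage. Once this normalization is granted, both parts reduce to a local degree computation at a single regular value, which I expect to be entirely routine.
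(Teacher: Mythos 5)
The paper states Lemma \ref{pinch} as a known fact and gives no proof of its own, so your argument stands or falls on its own merits; your reading of part (2) as ``a connected sum of degree one maps has degree one'' is indeed how the lemma is used later (e.g.\ in the proof of Theorem \ref{stably 1-dom}). Part (1) is essentially correct, with one phrasing slip: ``the inclusion on $X\setminus \operatorname{int}B_X$ and constant on $Y\setminus \operatorname{int}B_Y$'' is not a continuous map, since the two prescriptions disagree on the gluing sphere. The correct version is your first description (collapse the whole $Y$-piece and identify $(X\setminus \operatorname{int}B_X)/\partial$ with $X$), or equivalently a map sending the $Y$-piece onto the filled-in ball $\overline{B_X}$ by a map that is the identity on the boundary sphere; the local-degree computation at a point of $\operatorname{int}(X\setminus B_X)$ then goes through as you say.

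In part (2) there is a genuine gap in the normalization. Knowing only that $f$ restricts to an orientation-preserving homeomorphism $D\to f(D)$ does not make $f\# g$ well defined: after $\operatorname{int}f(D)$ is removed from the target, points of $X\setminus \operatorname{int}D$ may still map into $\operatorname{int}f(D)$. What is needed is the stronger normalization that $D$ is the \emph{entire} preimage of $f(D)$, i.e.\ $f(X\setminus \operatorname{int}D)\subset X'\setminus \operatorname{int}f(D)$. This is true, but it is not ``a local adjustment that removes any excess sheets'': an isolated preimage point of a regular value with local degree $\pm 1$ cannot be pushed off by a homotopy supported near it (the local degree obstructs this), and likewise your topological-category recipe of ``picking a preferred preimage and pushing the remaining preimages off'' fails as stated --- excess sheets can only be cancelled in pairs of opposite local degree, by a homotopy supported near an arc joining the two points whose image loop in the target is null-homotopic, and here one must use that a degree one map is surjective on $\pi_1$ to choose such an arc. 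After all cancellations a single preimage point remains, near which $f$ is a local homeomorphism, and a compactness argument shows that a sufficiently small target ball $B'$ satisfies $f^{-1}(B')=B$, which is the condition actually required for gluing. Two further cautions: the preimage of a point under a merely continuous map need not be finite, so some form of transversality must be invoked before any counting, and since the $4$-manifolds in this paper are topological and possibly nonsmoothable, this means topological transversality (Quinn) or smoothing away from a point rather than ordinary smooth transversality. With the normalization stated and justified in this stronger form, the rest of your construction, including the degree count at a value in the interior of $X'\setminus f(D)$, is correct and is the standard argument.
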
\label{splitting}

\begin{lemma}\label{splitting}
Suppose $X, Y,$ are closed oriented 4-manifolds. If $X$ 1-dominates $Y$, 
then  there are direct sum decompositions

(i) $H_*(X, F)=H_*(Y, F)\oplus G$, for $F=\ZZ$ or $\ZZ_2$ 

and orthogonal sum decompositions

(ii) $I_X = I_Y \oplus L \ \ \ \text{and}\,\, I(X, \ZZ_2)=I(Y, \ZZ_2)\oplus L'\ \ (\star).$
\end{lemma}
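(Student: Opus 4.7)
My plan is to use the identity $f_*[X]=[Y]$ together with naturality of the cap and cup products, realizing Poincar\'e duality as a geometric splitting of $f_*$.

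First, to establish (i), I would exhibit an explicit splitting of $f_*\co H_k(X;F)\to H_k(Y;F)$. Define $s\co H_k(Y;F)\to H_k(X;F)$ as the composition of Poincar\'e duality $H_k(Y;F)\cong H^{n-k}(Y;F)$, the pullback $f^*$, and Poincar\'e duality $H^{n-k}(X;F)\cong H_k(X;F)$; concretely, $s(\alpha\cap[Y])=f^*\alpha\cap[X]$. Naturality of the cap product gives
$$f_*\circ s(\alpha\cap[Y])=f_*(f^*\alpha\cap[X])=\alpha\cap f_*[X]=\alpha\cap[Y],$$
so $f_*\circ s=\id$, yielding the splitting $H_*(X;F)=H_*(Y;F)\oplus G$ with $G=\ker(f_*)$ for both $F=\ZZ$ and $F=\ZZ_2$.

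For (ii), I would then verify that, in degree $2$, this splitting is orthogonal with respect to the intersection pairing. Using the definition of $I_X$ via the cup product on Poincar\'e duals together with the cup-cap adjunction $(\gamma\cup\beta)(c)=\gamma(\beta\cap c)$, for $a=s(\alpha'\cap[Y])$ and $b=\beta\cap[X]\in\ker(f_*)$,
$$I_X(a,b)=\langle f^*\alpha'\cup\beta,[X]\rangle=\alpha'\bigl(f_*(\beta\cap[X])\bigr)=\alpha'(f_*b)=0.$$
An analogous calculation shows that $I_X$ restricted to $s(H_2(Y;F))$ pulls back to $I_Y$ under $s$, so the splittings $I_X=I_Y\oplus L$ and $I(X,\ZZ_2)=I(Y,\ZZ_2)\oplus L'$ both hold, with $L$ and $L'$ equal to the restrictions of the respective intersection forms to $\ker(f_*)$.

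The argument is essentially formal. The only point worth pausing on is uniformity in coefficients: since $f_*[X]=[Y]$ is an integral identity, it reduces mod $2$, and the naturality identities used hold over any coefficient ring, so the same proof handles $F=\ZZ$ and $F=\ZZ_2$ simultaneously. I do not anticipate any real obstacle.
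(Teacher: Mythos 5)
Your proposal is correct: the paper states this lemma as a known fact and gives no proof, and your argument is precisely the standard one it implicitly invokes — the Umkehr map $s=PD_X\circ f^{*}\circ PD_Y^{-1}$ splits $f_*$ by the projection formula, and the cup--cap adjunction gives orthogonality of $s(H_2(Y;F))$ and $\ker f_*$ together with $I_X(s(y_1),s(y_2))=I_Y(y_1,y_2)$. The only cosmetic caveat is that for $F=\ZZ$ the intersection form $I_X$ lives on $H_2(X)$ modulo torsion, to which your splitting descends, so nothing is affected.
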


\begin{remark}\label{no 2-torsion}

By the universal coefficient theorem
$$ 0 \to \mathrm{Ext}(H_1(X), \ZZ_2) \to H^2(X;\ZZ_2) \to \mathrm{Hom}(H_2(X), \ZZ_2) \to 0,$$
we see that if $H_1(X)$ has no 2-torsion, then the information of $I(X, \ZZ_2)$ is covered by that of $I_X$.

If $X$ 1-dominates $Y$ and $H_1(X)$ has no 2-torsion, then $H_1(Y)$ has no 2-torsion. Then the condition $(\star)$
in Lemma \ref{splitting}  (ii) can simply be presented as  $I_X = I_Y \oplus L.$
\end{remark}

Now we state some classification results for closed orientable 4-manifold with cyclic $\pi_1$.

Let $X$ be a closed orientable 4-manifold with $\pi_1(X)=\mathbb{Z}$. 
Let $\Lambda=\mathbb{Z}[\mathbb{Z}]=\mathbb{Z}[t,t^{-1}]$ be the group ring of the infinite cyclic group $\mathbb{Z}$ with generator $t$.  Let $\widetilde X$ be the universal cover of $X$, then the equivariant intersection form on the finitely-generated free $\Lambda$-module $\pi_2(X)=\pi_2(\widetilde X)=H_2(\widetilde X)$
$$S(\widetilde X) \colon \pi_2(X) \times \pi_2(X) \to \Lambda.$$
is a non-singular hermitian form over $\Lambda$ (c.~f.~\cite[\S 3]{H}).

Now we state the  classification result of Freedman and Quinn.
\begin{theorem}\label{F-Q}\cite{FQ}
A closed, oriented topological $4$-manifold $X$ with $\pi_1(X) = \mathbb Z$ is classified up to homeomorphism by $S(\widetilde X)$ and $ks(X)$. Any non-singular hermitian form on a finitely-generated free $\Lambda$-module can be realized by one or two manifolds.
\end{theorem}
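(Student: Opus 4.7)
The plan is to prove realization and uniqueness separately, both relying on Freedman's disc embedding theorem for topologically good fundamental groups, of which $\ZZ$ is one.

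\textbf{Realization.} Given a non-singular hermitian form $S$ on $\Lambda^r$, I would first construct a Poincar\'e $4$-complex $P$ with $\pi_1(P) = \ZZ$ and equivariant intersection form $S$: start from $S^1 \times S^3$ (which has $\pi_2 = 0$) and attach $r$ two-cells along null-homotopic loops arranged so that their lifts to the universal cover generate a rank-$r$ free $\Lambda$-module and pair by $S$; then cap off with higher cells to restore Poincar\'e duality. To realize $P$ by a topological $4$-manifold one invokes Freedman's surgery existence: the normal invariant is pinned down by the $w_2$ data read off from $S$, and the surgery obstruction in $L_4(\Lambda)$ is witnessed by $S$ itself. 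Connected summing with Freedman's exotic $\ast \CC P^2$ toggles $ks$ while preserving $S$; whether this toggle produces a genuinely new manifold accounts for the ``one or two'' dichotomy.

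\textbf{Uniqueness.} Suppose $X$ and $X'$ share $(S(\widetilde X), ks)$. Since $\pi_1 = \ZZ$ has cohomological dimension $1$, the Postnikov $2$-type of a Poincar\'e $4$-complex with this fundamental group is determined by $(\pi_1,\pi_2,S)$ (the relevant $k$-invariant vanishes automatically), so obstruction theory produces a homotopy equivalence $f \co X \to X'$. One then promotes $f$ to a degree-one normal map using the agreement of $w_2$, and runs topological surgery on an associated five-dimensional cobordism: the obstruction sits in $L_5(\Lambda)$, which by the Shaneson--Wall splitting for Laurent extensions decomposes as $L_5(\ZZ)\oplus L_4(\ZZ)$, both summands being killed by the coincidence of the forms and of $ks$. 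The result is a topological $s$-cobordism, to which Freedman--Quinn's topological $s$-cobordism theorem for good fundamental groups applies, yielding a homeomorphism $X \cong X'$.

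\textbf{Main obstacle.} Everything substantial is hidden inside the Freedman--Quinn machinery: the disc embedding theorem for $\pi_1 = \ZZ$ simultaneously underwrites four-dimensional topological surgery (for realization) and the five-dimensional topological $s$-cobordism theorem (for uniqueness). This rests on deep infinite constructions (Casson handles, capped gropes) that cannot be bypassed, and it is the one input that distinguishes good fundamental groups such as $\ZZ$ from the general case. The bookkeeping --- normal $1$-types, $L$-group calculations, and the role of $w_2$ --- is comparatively routine once this analytic engine is granted.
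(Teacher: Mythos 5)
This theorem is not proved in the paper at all: it is imported verbatim from Freedman--Quinn (the statement labelled \ref{F-Q} carries only the citation \cite{FQ}, and the later refinement is due to Freedman--Quinn 10.7A and its sequels), so there is no internal argument to compare yours against. What you have written is a sketch of the original surgery-theoretic proof itself, and at that level the skeleton is right: realization via topological surgery for the good group $\ZZ$, uniqueness via a homotopy equivalence promoted to a normal map, the Shaneson--Wall splitting $L_5(\ZZ[\ZZ])\cong L_5(\ZZ)\oplus L_4(\ZZ)$, and the topological $s$-cobordism theorem, all resting on the disc embedding theorem. Since the theorem being ``proved'' is exactly the Freedman--Quinn machinery, a review can only check whether your bookkeeping around that machine is accurate.

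Two points are not. First, the mechanism you offer for the ``one or two'' dichotomy is wrong: connected sum with the Chern manifold $*\CC P^2$ adds a $\langle 1\rangle$ summand to the equivariant intersection form, so it does not preserve $S(\widetilde X)$ and cannot be what toggles $ks$ within a fixed form. The dichotomy is governed by the parity of the form: when $S(\widetilde X)$ is even the manifold is spin and $ks$ is forced to equal $\sigma/8 \bmod 2$ by the (generalized) Rochlin/Freedman--Kirby theorem (compare Lemma \ref{FK} of this paper), so only one manifold occurs; when the form is odd both values of $ks$ are realized, by changing the normal invariant in the $H^4(X;\ZZ_2)$ direction --- the ``star'' construction, exactly the move used in the proof of Proposition \ref{RHS} (iii) here --- not by connected sum. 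Second, in the uniqueness step the vanishing of the $k$-invariant (which is correct, since $\ZZ$ has cohomological dimension one) gives you only the $2$-type; to get a homotopy equivalence of Poincar\'e complexes you must also match the image of the fundamental class, i.e.\ work with the quadratic $2$-type, and afterwards you must actually compute the normal invariant of the resulting homotopy equivalence in $[X,G/\topo]\cong H^2(X;\ZZ_2)\oplus H^4(X;\ZZ)$ and argue that agreement of $ks$ (together with the $\ZZ_2$-valued codimension-two Kervaire--Arf terms) kills it, and that the action of $L_5(\ZZ[\ZZ])\cong\ZZ$ on the structure set is absorbed by re-choosing the isometry of forms. These are precisely the places where the Freedman--Quinn proof does real work beyond the disc embedding theorem, so they should not be dismissed as routine bookkeeping.
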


Let $X$ be a closed oriented $4$-manifold, $X$  is spin if its second Stiefel-Whitney class $w_2(X)=0$,  or equivalently $I(X, \ZZ_2)$ is even.
If  $H_1(X)$ has no 2-torsion, then  $I(X, \ZZ_2)$ is even if and only if  $I_X$ is even (recall Remark \ref{no 2-torsion}). If $\pi_1(X) \cong \ZZ_n$, when $n$ is odd, there are two $w_2$-types --- $X$ is either spin ($w_2(X)=0$) or non-spin ($w_2(X) \ne 0$);  when $n$ is even, there are three $w_2$-types: $X$ is of \emph{type I} (totally non-spin) if the universal cover $\widetilde X$ of $X$ is non-spin, of \emph{type II} if $X$ is spin, and of t\emph{ype III}  (almost spin) if $X$ is non-spin but $\widetilde X$ is spin. 

The classification of topological $4$-manifolds with finite cyclic fundamental group is due to  Hambleton and Kreck.  

\begin{theorem}\label{H-K}\cite[Theorem C]{HK2}. The homeomorphism class of a closed orientable 4-manifold $X$ with $\pi_1(X)\cong\mathbb{Z}_n$ is classified by the fundamental group, the intersection form $I_X$, the $w_2$-type, and the Kirby-Siebenmann invariant $ks(X)$.
\end{theorem}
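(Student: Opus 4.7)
Since this is a citation of a substantial classification result, my approach would be to reconstruct the Hambleton--Kreck argument using Kreck's modified surgery theory. The first task is to identify the normal $1$-type $\xi \colon B \to B\mathrm{STOP}$ for each of the three $w_2$-types. In the spin case one takes $B \simeq B\mathrm{Spin} \times B\mathbb{Z}_n$; in the totally non-spin (type I) case one takes $B \simeq B\mathrm{STOP} \times B\mathbb{Z}_n$; and in the almost spin (type III) case $B$ is built from a twisted pullback that detects the nontrivial class in $H^2(B\mathbb{Z}_n;\mathbb{Z}/2)$ pulled back to $X$ but killed on the universal cover. Each $4$-manifold $X$ with $\pi_1(X)\cong\mathbb{Z}_n$ of the prescribed $w_2$-type admits a normal $1$-smoothing $\bar\nu\co X \to B$.

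Next I would verify that two such manifolds $X_0, X_1$ sharing the four listed invariants are $B$-bordant. The bordism group $\Omega_4^B$ is computed via an Atiyah--Hirzebruch argument using $H_*(B\mathbb{Z}_n)$ together with low-dimensional $\mathrm{STOP}$ or $\mathrm{Spin}$ bordism; the relevant invariants turn out to be the signature, the $\mathbb{Z}/2$--valued characteristic data packaged into $I(X,\mathbb{Z}_2)$ (which by Lemma~\ref{splitting} type reasoning is determined by $I_X$ together with the $w_2$-type when the torsion of $H_1$ behaves), the Kirby--Siebenmann invariant, and the induced map on $\pi_1$. Since all of these are fixed by hypothesis, one obtains a compact $B$-bordism $W^5$ from $X_0$ to $X_1$.

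The heart of the proof is then modified surgery on $W$. One first does surgery below the middle dimension to make $\bar\nu_W \co W \to B$ a $2$-equivalence; this is always possible. The obstruction to extending surgery to the middle dimension and producing an $s$-cobordism lies in the quadratic $L$-monoid associated with the normal $1$-type, which for finite cyclic groups maps to $L_5(\mathbb{Z}[\mathbb{Z}_n])$. Using the known structure of $L_5$ for cyclic groups together with the fact that the stable isomorphism class of the equivariant intersection form on $\pi_2$ is determined by $I_X$ and the $w_2$-type (this is the hermitian-form content of \cite{HK1}), one shows the obstruction vanishes, producing an $s$-cobordism. Since $\mathbb{Z}_n$ is a ``good'' group in the Freedman--Quinn sense, the topological $s$-cobordism theorem then yields a homeomorphism $X_0\cong X_1$.

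The main obstacle is precisely the middle step: controlling the surgery obstruction and showing that the stabilization that is generically required can be avoided when all four listed invariants agree. This is where the case distinction by $w_2$-type genuinely matters, since in the almost spin case one must keep careful track of the $w_2$-twisting when identifying the equivariant intersection form, and it is also where the input of \cite{HK1} on the classification of hermitian forms over $\mathbb{Z}[\mathbb{Z}_n]$ is indispensable.
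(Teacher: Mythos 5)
The paper offers no proof of this statement: it is quoted verbatim as Theorem C of \cite{HK2}, so there is no internal argument to compare against, and what you have written is a reconstruction of Hambleton--Kreck's own proof. Your outline does identify the right machinery --- normal $1$-types for the three $w_2$-types, a $B$-bordism between two manifolds sharing the listed invariants, modified surgery on that bordism, and the topological $s$-cobordism theorem, which applies because $\ZZ_n$ is good in the Freedman--Quinn sense.

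However, as a proof it has a genuine gap, and it is exactly the step you flag as ``the main obstacle'' and then do not carry out. In Kreck's modified surgery the obstruction attached to the bordism $W$ is not an element of $L_5(\ZZ[\ZZ_n])$ but of an obstruction monoid, and what must be shown is that it is \emph{elementary}; in general one only concludes a homeomorphism after stabilizing by copies of $S^2\times S^2$, i.e.\ one recovers the stable classification of \cite{HK1}. Removing the stabilization is precisely the new content of \cite{HK2}: it rests on their cancellation theorems for quadratic/hermitian forms over $\ZZ[\ZZ_n]$ (splitting off and cancelling hyperbolic summands, with a separate and delicate treatment of the almost spin case), and an appeal to ``the known structure of $L_5$ for cyclic groups'' or to the hermitian-form results of \cite{HK1} does not substitute for that argument. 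In addition, the bordism step is asserted rather than proved: one must actually compute $\Omega_4^B$ for each of the three normal $1$-types (e.g.\ via the James spectral sequence) and verify that the signature, $ks$, and the data carried by $I_X$ together with the $w_2$-type detect the bordism class --- in the spin case Rochlin-type congruences and secondary invariants enter, and in the almost spin case the relevant twisted spin bordism group has to be identified. So the outline points at the correct strategy, but the two computations on which the theorem actually rests (the bordism calculation and, above all, the unstable cancellation) are missing.
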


For more general finite $\pi_1$,  Hambleton and Kreck  proved the following finiteness result.

\begin{theorem}\label {2} \cite{HK1}
There are only finitely many homeomorphism classes of closed orientable four-manifolds $Y$ with given Euler characteristic number $\chi(Y)$ and given finite fundamental group $\pi_1(Y)$.
\end{theorem}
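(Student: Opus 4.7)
The plan is to reduce the classification of such $Y$ to a finite list of invariants and then bound each one by the hypotheses that $\pi_1(Y)=G$ is finite and $\chi(Y)=c$ is fixed.

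First I would extract the elementary homological consequences. Since $G$ is finite, $H_1(Y;\QQ)=0$, so $\beta_1(Y)=\beta_3(Y)=0$ by Lemma \ref{Euler}(1), and Lemma \ref{Euler}(2) then gives $\beta_2(Y)=c-2$. Poincar\'e duality combined with the universal coefficient theorem (as in Remark \ref{no 2-torsion}) yields
\[
\mathrm{Tors}\,H_2(Y;\ZZ)\ \cong\ \mathrm{Tors}\,H_1(Y;\ZZ)\ \cong\ G^{ab},
\]
so $H_2(Y;\ZZ)$ is determined as an abstract abelian group by $(G,c)$. The ordinary intersection form $I_Y$ on $H_2(Y)/\mathrm{Tors}$ is then a unimodular symmetric bilinear form over $\ZZ$ of rank $c-2$, and there are only finitely many such forms up to isomorphism: positive definite unimodular forms of a given rank are finite in number by classical lattice theory, and indefinite unimodular forms are classified by rank, signature and parity with $|\sigma|$ bounded by the rank.

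Second, I would pass to the equivariant picture. Following the spirit of Theorem \ref{H-K}, the Hambleton--Kreck topological classification for closed oriented $4$-manifolds with finite fundamental group records the homeomorphism type of $Y$ through the tuple consisting of $G$, the $w_2$-type (at most three options), the Kirby--Siebenmann invariant $ks(Y)$ (two options), the equivariant intersection form on $\pi_2(Y)=H_2(\widetilde Y;\ZZ)$ viewed as a hermitian $\ZZ G$-lattice, and a bounded amount of secondary data controlled by $\Aut(\pi_2(Y))$. Here the underlying $\ZZ$-rank of $\pi_2(Y)$ equals $|G|c-2$ since $\chi(\widetilde Y)=|G|\chi(Y)$ and $\widetilde Y$ is simply connected. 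To conclude, I would combine this classification with a finiteness statement for hermitian $\ZZ G$-forms of bounded $\ZZ$-rank up to isometry; the secondary refinements above each form contribute only finitely many further possibilities.

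The main obstacle is precisely this arithmetic step: finiteness of isomorphism classes of hermitian forms over the noncommutative order $\ZZ G$ of bounded $\ZZ$-rank. The Jordan--Zassenhaus theorem bounds the number of underlying $\ZZ G$-lattices, and a finiteness theorem for class sets of the unitary groups of hermitian forms over $\ZZ G$ — of the type developed in the Hambleton--Kreck program and its number-theoretic precursors — bounds the hermitian refinements on each lattice. Once this arithmetic input is granted, everything else reduces to bookkeeping within the topological $4$-manifold classification machinery.
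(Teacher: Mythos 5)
The paper does not prove this statement: Theorem \ref{2} is quoted directly from Hambleton--Kreck \cite{HK1}, so there is no internal argument to compare yours against; the relevant benchmark is the original proof. Your first step is fine and matches the necessary elementary reductions: $\beta_1(Y)=\beta_3(Y)=0$, $\beta_2(Y)=\chi(Y)-2$, the identification of $\mathrm{Tors}\,H_2(Y)$ with $G^{ab}$, finiteness of unimodular integral forms of bounded rank, and the rank bound $|G|\chi(Y)-2$ for $\pi_2(Y)$ as a $\ZZ$-module.

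The gap is in your second step. You invoke a classification of closed oriented $4$-manifolds with \emph{arbitrary} finite fundamental group by the tuple consisting of $G$, the $w_2$-type, $ks$, the equivariant intersection form on $\pi_2(Y)$, plus ``a bounded amount of secondary data controlled by $\Aut(\pi_2(Y))$''. No such classification is known for general finite $G$; the classification the present paper actually uses (Theorem \ref{H-K}, from \cite{HK2}) holds only for cyclic fundamental groups. For general finite $G$ the finiteness theorem is not a corollary of a classification: in \cite{HK1} it is obtained by combining (a) finiteness of the possible quadratic $2$-types of bounded rank --- this is where Jordan--Zassenhaus and class-number finiteness for hermitian forms over $\ZZ G$ enter --- with (b) a stable classification together with a cancellation theorem for $S^2\times S^2$ summands and surgery-theoretic control (using Freedman's topological surgery for good groups) of the manifolds realizing a fixed $2$-type. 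Your phrase ``bounded secondary data'' silently assumes the content of (b), namely that the listed invariants determine $Y$ up to finitely many homeomorphism types, which is essentially the theorem being proved; as written the argument is circular at that point. By contrast, the arithmetic input you single out as the ``main obstacle'' is genuinely available in the literature and is the one step it is reasonable to cite rather than prove.
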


\medskip

\noindent \textbf{Acknowledgement.} The authors would like to thank Ian Hambleton, Matthias Kreck and Hongbin Sun for helpful communications. The first author is supported by NSFC12071462. The second author is supported by NSFC11771021.

\section{Splitting of intersection forms and 1-dominations}

Suppose $X$ and $Y$ are closed orientable $4$-manifolds and $X$ 1-dominates $Y$. Then we have  
$I_X = I_Y \oplus L \ \ \ \text{and}\,\, I(X, \ZZ_2)=I(Y, \ZZ_2)\oplus L'\ \ (\star)$ 
by Lemma \ref{splitting}. 
In simply connected category, condition $(\star)$ is also a sufficient condition for 1-domination.

\begin{proposition}\label{DW}\cite{DW} 
Suppose $X$ and $Y$ are closed orientable simply connected four manifolds.
If $I_X=I_Y\oplus H$ (recall Remark \ref{no 2-torsion}), then $X$ 1-dominates  $Y$.
\end{proposition}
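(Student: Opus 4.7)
The plan is to show that $X$ is homeomorphic to a connected sum $Y \# Z$ for some suitable closed simply-connected $4$-manifold $Z$; then the standard pinch map $X \to Y$ collapsing the $Z$-summand to a point has degree one. By Freedman's classification, a simply-connected closed topological $4$-manifold is determined up to homeomorphism by the pair (intersection form, Kirby-Siebenmann invariant), subject to the Rokhlin constraint $ks \equiv \sigma/8 \pmod 2$ in the spin case. So the task reduces to choosing $Z$ such that $Y \# Z$ matches $X$ in both invariants, and then appealing to Freedman.

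I would split on the parity of $I_Y$. If $I_Y$ is even, then $I_X = I_Y \oplus H$ is also even, so both $X$ and $Y$ are spin. The topological Rokhlin theorem then forces $ks(X) \equiv \sigma(X)/8 = \sigma(Y)/8 \equiv ks(Y) \pmod 2$. Taking $Z = S^2 \times S^2$, the connected sum $Y \# (S^2 \times S^2)$ has intersection form $I_Y \oplus H = I_X$ and Kirby-Siebenmann invariant $ks(Y) = ks(X)$, so Freedman's theorem gives $X \cong Y \# (S^2 \times S^2)$.

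If $I_Y$ is odd, the same choice $Z = S^2 \times S^2$ still works whenever $ks(X) = ks(Y)$. The delicate subcase is $I_Y$ odd with $ks(X) \neq ks(Y)$, where $Z = S^2 \times S^2$ fails, since no simply-connected closed $4$-manifold with intersection form $H$ has $ks = 1$. I would instead take $Z = \ast\mathbb{CP}^2 \# \overline{\mathbb{CP}^2}$, where $\ast\mathbb{CP}^2$ denotes the Chern manifold with intersection form $\langle 1 \rangle$ and $ks = 1$; this $Z$ has intersection form $\langle 1 \rangle \oplus \langle -1 \rangle$ and $ks = 1$. Because $I_Y$ is odd, the indefinite unimodular forms $I_Y \oplus H$ and $I_Y \oplus \langle 1, -1 \rangle$ are both odd of the same rank and signature, hence isomorphic by the classification of indefinite odd unimodular forms over $\mathbb{Z}$. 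Thus $Y \# Z$ has intersection form $I_X$ and Kirby-Siebenmann invariant $ks(Y) + 1 = ks(X)$, and Freedman again yields $X \cong Y \# Z$, completing the argument via the pinch map.

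The main obstacle is exactly this odd-form, mismatched-$ks$ subcase. The naive candidate $S^2 \times S^2$ is blocked by the Rokhlin constraint, and the key idea is to substitute the exotic summand $\ast\mathbb{CP}^2 \# \overline{\mathbb{CP}^2}$, whose intersection form becomes isomorphic to $H$ only after summing with the odd form $I_Y$, which is exactly the situation we are in. All other cases reduce to direct invariant comparison via Freedman's classification.
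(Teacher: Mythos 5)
Your three-case argument is internally correct, but it proves a narrower statement than the one the paper actually uses, because you have read $H$ as the rank-two hyperbolic form. In this paper the symbol plays the role of an \emph{arbitrary} orthogonal complement: Proposition \ref{DW} is the simply connected instance of condition $(\star)$, it is invoked in the proof of Theorem \ref{1-dom n} in the form ``$I_N$ is an orthogonal summand of $I_M$, therefore there is a degree $1$ map $M\to N$'' with an arbitrary summand $L$, and in Theorem \ref{non 1-dom} the complement is even allowed to be zero. For a general complement your strategy --- choose $Z$ matching both the form and the Kirby--Siebenmann invariant and apply Freedman's homeomorphism classification --- genuinely fails, since $X$ need not be homeomorphic to $Y\# Z$ for \emph{any} $Z$. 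Concretely, let $Y=\mathbb{CP}^2$ and let $X$ be the closed simply connected manifold with $I_X\cong \langle 1\rangle\oplus E_8$ and $ks(X)=0$ (it exists by Theorem \ref{F-Q}, the form being odd). Then $I_X\cong I_Y\oplus E_8$, but positive definite unimodular forms decompose uniquely into indecomposables, so any $Z$ with $\mathbb{CP}^2\# Z\cong X$ would have $I_Z\cong E_8$, hence $Z$ would be the $E_8$-manifold with $ks(Z)=1$ by Lemma \ref{FK}, giving $ks(\mathbb{CP}^2\# Z)=1\neq 0$. Here your re-splitting trick (trading $H$ for $\langle 1\rangle\oplus\langle -1\rangle$) is unavailable because the form is definite. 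The same obstruction already appears with zero complement: $\ast\mathbb{CP}^2$ and $\mathbb{CP}^2$ have isomorphic forms and different $ks$, yet a degree one map exists between them.

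The repair is to notice that you never needed a homeomorphism: a degree one map is a homotopy-theoretic object, so the Kirby--Siebenmann invariant is irrelevant. By the Milnor--Whitehead homotopy classification, the oriented homotopy type of a closed simply connected $4$-manifold is determined by its intersection form, and any isometry of forms is induced by an orientation-preserving (hence degree one) homotopy equivalence. So given $I_X\cong I_Y\oplus L$, take any closed simply connected $Z$ realizing $L$ (Freedman realization, with either value of $ks$), observe $I_{Y\# Z}\cong I_X$, obtain a degree one homotopy equivalence $X\to Y\# Z$, and compose with the pinch $Y\# Z\to Y$ of Lemma \ref{pinch}. This covers the general summand, and it also makes the $ks$ case analysis in your hyperbolic case unnecessary. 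For comparison, the paper gives no proof but cites \cite{DW}, whose argument is likewise homotopy-theoretic (constructing the map via the cell structure of $Y$ minus a ball) and yields exactly this general orthogonal-summand statement.
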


We wonder in what degree Proposition \ref{DW} can be extended from closed orientable 4-manifolds with trivial $\pi_1$ to closed orientable 4-manifolds with cyclic $\pi_1$. We will discuss this in next two subsections according to $\pi_1$ is infinite or finite.

\subsection{ $\pi_1=\ZZ$: 1-domination and stable 1-domination}

We will see  in this case $I_X=I_Y\oplus H$ does not guarantee that $X$ 1-dominates $Y$, but guarantee that
  $X$ stably 1-dominates $Y$. This fact is included in the following two Theorems \ref{stably 1-dom} and \ref{non 1-dom}.
  
  
\begin{theorem}\label{stably 1-dom} 
Suppose $X$ and $Y$ are closed orientable four manifolds, $\pi_1(X)=\pi_1(Y)=\mathbb{Z}$,
and $I_X=I_Y\oplus H$.
Then $X$ stably 1-dominates $Y$, precisely  there is a degree one map 
$$f: X\#(\#_3S^2\times S^2)\to Y\#(\#_3S^2\times S^2).$$
\end{theorem}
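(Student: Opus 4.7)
The strategy is to reduce to the simply-connected case (Proposition~\ref{DW}) by stably splitting off an $S^1\times S^3$ summand from both $X$ and $Y$.

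\emph{Step 1 (stable splitting).} Using the Freedman--Quinn classification (Theorem~\ref{F-Q}) together with Hambleton--Teichner's analysis of equivariant intersection forms over $\Lambda=\mathbb{Z}[t,t^{-1}]$, I would produce closed simply-connected $4$-manifolds $N_X, N_Y$ and an integer $k\le 3$ with
\[
X\#(\#_k S^2\times S^2)\;\cong\;N_X\#(S^1\times S^3),\qquad Y\#(\#_k S^2\times S^2)\;\cong\;N_Y\#(S^1\times S^3).
\]
Algebraically this amounts to writing $S(\widetilde X)\oplus kH(\Lambda)$ as the orthogonal sum of a free hyperbolic $H(\Lambda)$ (encoding the standard $\pi_1$-generator of the $S^1$-factor) and the equivariant form of a simply-connected manifold. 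The Kirby--Siebenmann invariant is preserved throughout, since both $S^2\times S^2$ and $S^1\times S^3$ have $ks=0$.

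\emph{Step 2 (match ordinary forms).} Since $I_{S^1\times S^3}=0$ and each $S^2\times S^2$ contributes an $H$ summand,
\[
I_{N_X}=I_X\oplus kH,\qquad I_{N_Y}=I_Y\oplus kH.
\]
Combined with the hypothesis $I_X=I_Y\oplus H$, this gives $I_{N_X}=I_{N_Y}\oplus H$, and Proposition~\ref{DW} yields a degree-one map $g\colon N_X\to N_Y$.

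\emph{Step 3 (assemble and top up).} Take the connected sum of $g$ with the identity on $S^1\times S^3$; Lemma~\ref{pinch}(2), together with the homeomorphisms of Step~1, produces a degree-one map
\[
X\#(\#_k S^2\times S^2)\;\to\;Y\#(\#_k S^2\times S^2).
\]
If $k<3$, connect-sum once more with the identity on the remaining $3-k$ copies of $S^2\times S^2$ (again Lemma~\ref{pinch}(2)) to land on the statement.

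\emph{Main obstacle.} Everything after Step~1 is routine bookkeeping; the crux is the stable splitting with a small, uniform $k$. The hypothesis $I_X=I_Y\oplus H$ concerns only the ordinary intersection form over $\mathbb{Z}$ and a priori says nothing about the equivariant form $S(\widetilde X)$ over $\Lambda$, while stabilization by $S^2\times S^2$ only adds $H(\Lambda)$ to $S(\widetilde X)$. One must verify that three such summands already force the resulting form to match that of $N_X\#(S^1\times S^3)$, with matching $ks$. Freedman's disk-embedding theorem for the good group $\mathbb{Z}$, which allows an algebraic splitting of $S(\widetilde X)$ to be realized by disjointly embedded $2$-spheres after a bounded number of stabilizations, is the expected technical input that pins down the bound $k\le 3$.
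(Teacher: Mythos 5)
Your plan is essentially the paper's proof: stabilize both manifolds by $\#_3 S^2\times S^2$, use the splitting off of an $S^1\times S^3$ summand with simply connected complement, apply Proposition \ref{DW} to the simply connected pieces, and assemble via Lemma \ref{pinch}(2). The ``main obstacle'' you flag is exactly the Hambleton--Teichner decomposition theorem quoted in the paper as Theorem \ref{HT1}, whose purely numerical hypothesis $\beta_2-|\sigma|\ge 6$ is automatic after three stabilizations because $|\sigma(X)|\le\beta_2(X)$, so no separate disk-embedding or equivariant-form argument is needed to pin down $k=3$.
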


The proof uses the following decomposition result of Hambleton and Teichner.
\begin{theorem}\label{HT1}\cite{HT}
Let $X$ be a closed orientable four-manifold with $\pi_1(X)\cong\mathbb{Z}$. If $\beta_2(X)-|\sigma(X)|\ge 6$, then $X=M\# S^1\times S^3$ where $M$ is a closed simply connected 4-manifold.
\end{theorem}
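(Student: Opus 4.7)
The plan is to reduce the problem to the simply connected setting by applying the Hambleton--Teichner splitting theorem (Theorem \ref{HT1}) to the stabilizations of $X$ and $Y$, and then invoking Proposition \ref{DW} on the simply connected pieces. Set $W=\#_3 S^2\times S^2$, $X'=X\# W$, and $Y'=Y\# W$.

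First I would check that Theorem \ref{HT1} applies to both $X'$ and $Y'$. For any closed oriented $4$-manifold $Z$, writing the signature as $\sigma(Z)=b_+-b_-$ and the second Betti number as $\beta_2(Z)=b_++b_-$ gives $\beta_2(Z)\ge |\sigma(Z)|$. Stabilization by one copy of $S^2\times S^2$ adds $2$ to $\beta_2$ and leaves $\sigma$ unchanged (since $I_{S^2\times S^2}=H$ has signature zero), so
$$\beta_2(X')-|\sigma(X')|=\beta_2(X)-|\sigma(X)|+6\ge 6,$$
and similarly for $Y'$. This is precisely the role of the number $3$ in the statement. Theorem \ref{HT1} then provides simply connected closed $4$-manifolds $M$ and $N$ with $X'\cong M\# S^1\times S^3$ and $Y'\cong N\# S^1\times S^3$.

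Next I would extract the intersection-form input for the simply connected pieces. Since $S^1\times S^3$ has trivial intersection form, $I_{X'}=I_M$ and $I_{Y'}=I_N$. Stabilizing the hypothesis $I_X=I_Y\oplus H$ by adding $3H$ to both sides yields $I_{X'}=I_{Y'}\oplus H$, and therefore $I_M=I_N\oplus H$. Because $M$ and $N$ are simply connected, Proposition \ref{DW} produces a degree one map $g\co M\to N$. Finally, applying Lemma \ref{pinch}(2) to $g$ and the identity map $\id\co S^1\times S^3\to S^1\times S^3$ yields a degree one map
$$f\co X'=M\#(S^1\times S^3)\longrightarrow N\#(S^1\times S^3)=Y',$$
as required.

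The substantive input here is the Hambleton--Teichner splitting; once it is in hand, the only mild obstacle is making sure the hypotheses for Theorem \ref{HT1} are arranged uniformly on both sides by a common stabilization, which the bound $\beta_2\ge |\sigma|$ accomplishes with exactly three copies of $S^2\times S^2$. Everything else is formal manipulation of intersection forms and the standard functoriality of degree one maps under connected sum.
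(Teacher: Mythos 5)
Your proposal does not prove the statement in question. Theorem \ref{HT1} is the Hambleton--Teichner splitting theorem: a single closed orientable $4$-manifold $X$ with $\pi_1(X)\cong\ZZ$ and $\beta_2(X)-|\sigma(X)|\ge 6$ decomposes as $M\#(S^1\times S^3)$ with $M$ simply connected. What you have written is instead a proof of Theorem \ref{stably 1-dom} (stable $1$-domination under the hypothesis $I_X=I_Y\oplus H$): the second manifold $Y$, the intersection-form hypothesis, and the degree one map you construct appear nowhere in the statement you were asked to prove. Worse, your argument invokes Theorem \ref{HT1} itself as its principal input in order to produce the simply connected pieces $M$ and $N$, so read as a proof of Theorem \ref{HT1} it is circular.

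The splitting theorem is not a formal consequence of connected-sum bookkeeping; it is quoted in the paper from \cite{HT} and its proof requires genuine content. By Theorem \ref{F-Q}, a closed oriented $4$-manifold with $\pi_1\cong\ZZ$ is determined by its equivariant intersection form $S(\widetilde X)$ over $\Lambda=\ZZ[t,t^{-1}]$ together with the Kirby--Siebenmann invariant, and the heart of the matter is the algebraic theorem that a nonsingular hermitian form over $\Lambda$ whose rank exceeds $|\sigma|$ by at least $6$ is extended from $\ZZ$; one then realizes the extended form by $M\#(S^1\times S^3)$ and matches the remaining invariants. Theorem \ref{HT2} in this very paper shows that extension from $\ZZ$ can fail once the hypothesis $\beta_2-|\sigma|\ge 6$ is dropped, which is precisely what makes Theorem \ref{non 1-dom} work, so the hypothesis is doing real work that your argument never engages with. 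For what it is worth, your computation that stabilizing by three copies of $S^2\times S^2$ forces $\beta_2-|\sigma|\ge 6$ is correct, and your argument does coincide with the paper's proof of Theorem \ref{stably 1-dom} --- but that is a different theorem.
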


\begin{proof}[Proof of Theorem \ref{stably 1-dom}]

Denote $$X^*=X\#(\#_3S^2\times S^2),\,\, Y^*= Y\#(\#_3S^2\times S^2). \qquad (2.0)$$

Suppose 
$$I_X=I_Y\oplus H.$$
From the connected sum construction of $X^*$ and $Y^*$, we still have
$$I_{X^*}=I_{Y^*}\oplus H.\qquad (2.1)$$

Since $\#_3S^2\times S^2$ is simply connected,  by Seifert-van Kampen's theorem we  have
 
 $$\pi_1(X^*)=\pi_1(Y^*)=\mathbb{Z}. \qquad (2.2)$$

Since
 $$\sigma(\#_3S^2\times S^2)=0,\,\, \text{and}\,\, \beta_2(\#_3S^2\times S^2)=6.$$
 
We have
$$\beta_2(X^*)=\beta_2(X)+\beta_2(\#_3S^2\times S^2)=\beta_2(X)+6.$$
and 
$$\sigma(X^*)=\sigma(X)+\sigma(\#_3S^2\times S^2)=\sigma(X).$$ 

Since $|\sigma(X)|\le \beta_2(X)$, then we have 
$$\beta_2(X^*)-|\sigma(X^*)|=\beta_2(X)+6-|\sigma(X)|\ge 6. \qquad (2.3)$$
By the same reason we have 
$$\beta_2(Y^*)-|\sigma(Y^*)|\ge 6.\qquad (2.4)$$

Now by (2.1), (2.3) and (2.4), we can apply Theorem \ref{HT1} to get

$$X^*=X_1\#S^1\times S^3, \,\, Y^*=Y_1\# S^1\times S^3\qquad (2.5)$$
for some simply connected 4-manifolds $X_1$ and $Y_1$.

Since $S^1\times S^3$ has vanishing intersection form, by comparing (2.2) and (2.5), we have 

$$I_{X_1}=I_{X^*}=I_{Y^*}\oplus H=I_{Y_1}\oplus H. \qquad (2.6)$$

Since both $X_1$ and $Y_1$ are simply connected, by (2.6) and Proposition \ref{DW},
there is a degree one map 
$$g: X_1\to Y_1.$$
Since $S^1\times S^3$ 1-dominates $S^1\times S^3$, by Lemma \ref{pinch} (2),
$X^*= X_1\#S^1\times S^3$ 1-dominates   $Y^*=Y_1\#S^1\times S^3$. Compare 
(2.0) and (2.5), we have a degree one map

$$f: X\#(\#_3S^2\times S^2)\to Y\#(\#_3S^2\times S^2).$$

This finishes the proof.
\end{proof}

\begin{theorem}\label{non 1-dom}
There exist two closed orientable 4-manifolds $X$ and $Y$ with 

(i) $\pi_1(X)\cong\pi_1(Y)\cong \mathbb Z$ and 

(ii) $I_X=I_Y\oplus H$, 

(iii) $X$ does not 1-dominates $Y$.
\end{theorem}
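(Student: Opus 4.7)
The plan is to exploit the refined classification of closed $4$-manifolds with $\pi_1\cong\mathbb{Z}$ by the equivariant intersection form $S(\widetilde X)$ over $\Lambda=\mathbb{Z}[t,t^{-1}]$ (Theorem~\ref{F-Q}). Because the ordinary form $I_X$ is only the augmentation of $S(\widetilde X)$ under $t\mapsto 1$, two manifolds can share the same $I_X$ while carrying genuinely different equivariant forms, and this extra flexibility powers the counter-example.

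First I would establish the obstruction. If $f\co X\to Y$ has degree one with $\pi_1(X)=\pi_1(Y)=\mathbb{Z}$, then $f_*\co\mathbb{Z}\to\mathbb{Z}$ is surjective and hence an isomorphism, so $f$ lifts to an equivariant proper degree one map $\widetilde f\co\widetilde X\to\widetilde Y$ between universal covers. The $\Lambda$-equivariant analogue of Lemma~\ref{splitting}, proved via Poincar\'e--Lefschetz duality with $\Lambda$-coefficients and the Umkehr map for $\widetilde f$, yields an orthogonal splitting of hermitian $\Lambda$-forms
\[
S(\widetilde X)\cong S(\widetilde Y)\oplus S',
\]
where $S'$ is a non-singular hermitian form on a free $\Lambda$-module of rank $2$ whose augmentation is the $\mathbb{Z}$-hyperbolic form $H$ (using $I_X=I_Y\oplus H$ together with Witt cancellation over $\mathbb{Z}$).

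To build the counter-example I would take $Y=M\#(S^1\times S^3)$ for a suitable simply connected $M$, so that $S(\widetilde Y)$ is \emph{extended}: its Gram matrix in a natural basis of $\pi_2(Y)$ has entries in $\mathbb{Z}\subset\Lambda$ and equals $I_M$. For $X$, the task is to exhibit a non-singular hermitian form $S_X$ on a free $\Lambda$-module, augmenting to $I_M\oplus H$, that admits no $\Lambda$-orthogonal summand isomorphic to $S(\widetilde Y)$. Theorem~\ref{F-Q} then realizes $S_X$ by a closed orientable topological $4$-manifold $X$ with $\pi_1(X)\cong\mathbb{Z}$ and $I_X=I_Y\oplus H$, and the obstruction above forbids any degree one map $X\to Y$. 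A natural source is Hambleton--Teichner's work \cite{HT}: below the stable range of Theorem~\ref{HT1}, they construct closed oriented $4$-manifolds with $\pi_1\cong\mathbb{Z}$ that are \emph{not} of the form $M'\#(S^1\times S^3)$, and this failure is witnessed precisely by an equivariant intersection form that is genuinely $\Lambda$-valued rather than integer-valued.

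The hard part is verifying rigorously that the chosen $S(\widetilde X)$ admits no $\Lambda$-orthogonal summand isomorphic to the integer-valued $S(\widetilde Y)$. Even when the $\mathbb{Z}$-augmentations split, there is no a priori reason the $\Lambda$-form should split, and confirming this requires a direct calculation using invariants of hermitian $\Lambda$-forms finer than rank, signature, and discriminant -- for instance, reduction modulo the augmentation ideal $(t-1)\subset\Lambda$, or Alexander-module-type data attached to the form. Once non-splitting is established, the first-step obstruction immediately rules out any degree one map $X\to Y$, giving the desired pair.
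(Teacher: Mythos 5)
Your overall strategy---distinguishing $X$ from $Y$ by the equivariant intersection form over $\Lambda$ and invoking Freedman--Quinn (Theorem \ref{F-Q}) to realize forms by manifolds---is the same idea the paper uses, but your proposal stops exactly where the real work lies, and the way you arranged the roles makes that remaining work genuinely hard. You place the extended form on the target $Y=M\#(S^1\times S^3)$ and ask for a domain $X$ whose form $S(\widetilde X)$ augments to $I_Y\oplus H$ but admits no $\Lambda$-orthogonal summand isomorphic to $S(\widetilde Y)$; you then concede that verifying this non-splitting ``requires a direct calculation'' which you do not carry out, and the source you point to does not supply it: the Hambleton--Teichner theorem (Theorem \ref{HT2}) says that one specific rank-$4$ form $A$ is not \emph{extended} from $\mathbb{Z}$, which is neither the same as, nor an obvious consequence of, the statement that some hermitian $\Lambda$-form fails to contain a prescribed extended form as an orthogonal summand. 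In addition, your equivariant analogue of Lemma \ref{splitting} (orthogonal splitting of $S(\widetilde X)$ induced by a degree one map, with the complement non-singular of rank $2$) is asserted rather than proved; that step is plausible and standard in spirit, but as written your argument has a genuine gap at the decisive non-splitting claim, so no counterexample is actually produced.

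The paper avoids both difficulties by reversing the roles and equalizing the ranks: $Y$ is chosen, via Freedman--Quinn, to realize the non-extended form $A$ as $S(\widetilde Y)$, and $X=X_1\#(S^1\times S^3)$ with $X_1$ simply connected and $I_{X_1}\cong I_Y$, so that $I_X\cong I_Y$ (the summand $H$ in (ii) is allowed to be trivial) and in particular $\beta_2(X)=\beta_2(Y)$. A degree one map $X\to Y$ then induces a surjective $\Lambda$-homomorphism $\pi_2(X)\to\pi_2(Y)$ between free $\Lambda$-modules of the \emph{same} rank, hence an isomorphism, and it preserves the hermitian forms; since $S(\widetilde X)$ is extended from $\mathbb{Z}$ (being a connected sum with $S^1\times S^3$) while $A$ is not, this is an immediate contradiction, with no splitting lemma over $\Lambda$ and no non-splitting computation required. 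To repair your version you would either have to prove the missing non-splitting statement for a concrete form, or simply adopt the paper's equal-rank arrangement, where Theorem \ref{HT2} applies verbatim.
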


We need some preparation to prove Theorem \ref{non 1-dom}.

Let $\iota: \mathbb{Z}\rightarrow \Lambda$, $1 \mapsto 1$ be the canonical inclusion and treat $\mathbb{Z}$ as a subring of $\Lambda$. 
Let $N$ be a finitely generated 
free $\Lambda$-module,
$$\langle \  , \ \rangle \colon N \times N \to \Lambda$$
be a hermitian form on $N$. We say $\langle \  , \ \rangle$ is extended from $\mathbb{Z}$ if there exists a
basis $\{b_i\}$ of $N$ such that $\langle b_i, b_j \rangle \in\mathbb{Z}\subset \Lambda, \,\, \forall i,j.$

Let $s=t+t^{-1}$. Let
$$A=\left(
\begin{matrix}
1+s+s^2 & s+s^2 & 1+s & s\\
s+s^2 & 1+s+s^2 & s & 1+s\\
1+s & s & 2 & 0\\
s & 1+s & 0 & 2
\end{matrix}
\right).$$
then $A$ describes a hermitian form of rank $4$ over $\Lambda$. 

The proof of Theorem \ref{non 1-dom} needs the following algebraic result on hermitian forms over $\Lambda$ due to Hambleton and Teichner \cite{HT}.
\begin{theorem}[HT, Thm 1.1] \label{HT2}
The Hermitian form $A$ is not extended from $\mathbb{Z}$.
\end{theorem}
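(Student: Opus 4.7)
The strategy is proof by contradiction. Suppose $A = P^* B P$ with $P \in \GL_4(\Lambda)$ and $B$ an integer symmetric matrix. First I would identify $B$ up to $\mathbb{Z}$-equivalence: applying the augmentation $\epsilon \colon \Lambda \to \mathbb{Z}$, $t \mapsto 1$, gives $\epsilon(A) = \epsilon(P)^T B \epsilon(P)$, so $B \sim_\mathbb{Z} \epsilon(A)$. A direct computation shows $\epsilon(A)$ is a rank-$4$ positive-definite unimodular integer matrix, hence $\sim_\mathbb{Z} I_4$ by the classification of definite forms in rank $\le 7$. Absorbing the $\mathbb{Z}$-equivalence into $P$, we may take $B = I_4$, so the goal becomes: show that $A \ne Q^* Q$ for any $Q \in \GL_4(\Lambda)$.

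Next I would analyze the columns $q_1, q_2, q_3, q_4$ of such a hypothetical $Q$, which satisfy $\langle q_i, q_j \rangle = A_{ij}$ in the standard hermitian form on $\Lambda^4$. Since $A_{33} = A_{44} = 2$ and an elementary check shows no element of $\Lambda$ has standard norm $2$, each of $q_3, q_4$ must have exactly two non-zero coordinates, each a unit $\pm t^k$. Using the freedom to permute rows of $Q$ and to rescale them by units (both preserve $Q^* Q$), I would normalize $q_3 = (1, \varepsilon_3, 0, 0)$ with $\varepsilon_3 \in \{\pm 1\}$, and then use $\langle q_3, q_4 \rangle = 0$ to conclude $q_4 = (0, 0, 1, \varepsilon_4)$; the alternative with $q_4$ supported on the first two coordinates is ruled out by combining $\langle q_1, q_3 \rangle$ and $\langle q_1, q_4 \rangle$ into an equation $2\bar q_{11} = 1 + s(1 \pm t^k)$ whose right side is not divisible by $2$ in $\Lambda$ for any $k$.

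The final step is to derive a contradiction from the norm of $q_1$. The linear constraints $\langle q_1, q_3 \rangle = 1+s$ and $\langle q_1, q_4 \rangle = s$ pin down $q_1 = (\alpha, \varepsilon_3((1+s)-\alpha), \gamma, \varepsilon_4(s - \gamma))$, and $\langle q_1, q_1 \rangle = 1+s+s^2$ simplifies to the identity $2(\bar\alpha\alpha + \bar\gamma\gamma) = (1+s)(\alpha + \bar\alpha - s) + s(\gamma + \bar\gamma)$ in $\mathbb{Z}[s]$. Decomposing $\alpha = \alpha_0 + t\alpha_1$ and $\gamma = \gamma_0 + t\gamma_1$ with $\alpha_i, \gamma_i \in \mathbb{Z}[s]$, one finds $\bar\alpha\alpha = \alpha_0^2 + \alpha_1^2 + s\alpha_0\alpha_1$ and $\alpha + \bar\alpha = 2\alpha_0 + s\alpha_1$. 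The divisibility of the right side by $2$ then forces $(1+s)(\alpha_1 + 1) + s\gamma_1 \equiv 0 \pmod 2$, and evaluating at $s = 0$ gives $\alpha_1(0)$ odd. On the other hand, dividing the identity by $2$ and evaluating at $s=0$ yields $\alpha_0(0)^2 + \alpha_1(0)^2 + \gamma_0(0)^2 + \gamma_1(0)^2 = \alpha_0(0)$, a sum of non-negative integer squares equal to $\alpha_0(0) \in \{0, 1\}$, which forces $\alpha_1(0) = 0$; contradiction.

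The main obstacle is the bookkeeping in the second paragraph: verifying that the symmetries used to normalize $q_3$ and $q_4$ really exhaust the permissible operations in $\GL_4(\Lambda)$, and that all alternatives for $q_4$ are correctly eliminated. Once this setup is in place, the contradiction pops out of a single specialization at $s = 0$ in the norm equation for $q_1$.
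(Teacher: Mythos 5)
This theorem is quoted by the paper from Hambleton--Teichner ([HT, Thm 1.1]) and is not proved there, so there is no internal proof to compare against; I have therefore checked your argument on its own terms, and it holds up. The reduction to $B=I_4$ is valid: $\epsilon(A)$ (at $t=1$, i.e.\ $s=2$) is positive definite with determinant $1$, and rank-$4$ definite unimodular forms over $\ZZ$ are diagonal, so the equivalence can be absorbed into $P$. The norm-$2$ analysis is correct: the constant term of $\sum_i \bar x_i x_i$ is the sum of the squares of all coefficients of all coordinates, and a single coordinate with two unit coefficients produces nonconstant terms, so $q_3,q_4$ each have exactly two coordinates equal to $\pm t^k$; the monomial row operations you use are unitary since $\overline{(\pm t^k)}(\pm t^k)=1$, and the bad position of $q_4$ is killed by augmenting $2\bar q_{11}=1+s(1\pm t^k)$ at $t=1$, where the right side becomes $1$ or $5$. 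I verified the identity $2(\bar\alpha\alpha+\bar\gamma\gamma)=(1+s)(\alpha+\bar\alpha-s)+s(\gamma+\bar\gamma)$ and the endgame: using $\Lambda=\ZZ[s]\oplus t\,\ZZ[s]$ (valid since $t^2=st-1$), the mod-$2$ reduction gives $(1+s)(\alpha_1+1)+s\gamma_1\equiv 0$, hence $\alpha_1(0)$ odd, while dividing the integral identity by $2$ and evaluating at $s=0$ gives $\alpha_0(0)^2+\alpha_1(0)^2+\gamma_0(0)^2+\gamma_1(0)^2=\alpha_0(0)$, which forces $\alpha_0(0)\in\{0,1\}$ and $\alpha_1(0)=0$ --- a genuine contradiction. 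Note that your argument never uses $q_2$ or the entries $A_{12},A_{22},A_{23},A_{24}$, which is fine (a contradiction from a subsystem suffices) and arguably makes the proof cleaner. The only work remaining for a full write-up is the bookkeeping you yourself flag, and I see no gap in it.
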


It is also easy to see the following

\begin{proposition}
If $X=S^1\times S^3\# X^\Delta$ for some simply connected 4-manifold $X^\Delta$, then $S(\widetilde X)$ is extended from $\mathbb{Z}$.
\end{proposition}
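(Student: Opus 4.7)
The plan is to identify $\widetilde X$ explicitly and compute the equivariant intersection form on a geometric basis. Since $X^\Delta$ is simply connected, $\pi_1(X) = \pi_1(S^1 \times S^3) = \ZZ$, and the universal cover of $S^1\times S^3$ is $\RR \times S^3$. Lifting the connected sum construction, $\widetilde X$ is obtained from $\RR \times S^3$ by removing a $\ZZ$-equivariant family of open $4$-balls $\{B_n\}_{n\in\ZZ}$ and gluing in, along each new boundary $S^3$, a copy $X_n^\Delta$ of $X^\Delta$ with an open $4$-ball removed; the deck transformation $t$ shifts $n \mapsto n+1$.

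Next I would compute $\pi_2(X) = H_2(\widetilde X)$ as a $\Lambda$-module. A Mayer--Vietoris argument applied to the decomposition $\widetilde X = \bigl((\RR \times S^3) \setminus \coprod_n B_n\bigr) \cup \coprod_n X_n^\Delta$, using that $\RR \times S^3$ minus open balls has vanishing $H_1$ and $H_2$ and that the overlap is a disjoint union of $3$-spheres, gives
$$H_2(\widetilde X) \;\cong\; \bigoplus_{n\in\ZZ} H_2(X_n^\Delta) \;\cong\; \Lambda \otimes_{\ZZ} H_2(X^\Delta).$$
In particular, any $\ZZ$-basis $\{e_1,\ldots,e_k\}$ of $H_2(X^\Delta)$, lifted into the $0$-th copy $X_0^\Delta \subset \widetilde X$, yields a $\Lambda$-basis $\{\widetilde e_1,\ldots,\widetilde e_k\}$ of $H_2(\widetilde X)$.

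The final step is to verify that the equivariant intersection form is integral in this basis. By definition,
$$S(\widetilde X)(\widetilde e_i, \widetilde e_j) \;=\; \sum_{n \in \ZZ} \bigl(\widetilde e_i \cdot t^n \widetilde e_j\bigr)_{\widetilde X}\, t^n,$$
where the coefficients are ordinary integer intersection numbers in $\widetilde X$. Represent each $\widetilde e_i$ by a closed surface pushed into the interior of $X_0^\Delta$; then $t^n \widetilde e_j$ is represented by a surface in $X_n^\Delta$. For $n \neq 0$ the summands $X_0^\Delta$ and $X_n^\Delta$ are disjoint in $\widetilde X$, so the intersection number vanishes; for $n = 0$ both surfaces lie in the single summand $X_0^\Delta$, and the intersection number equals the ordinary pairing $\langle e_i, e_j\rangle_{X^\Delta} \in \ZZ$. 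Hence every entry of $S(\widetilde X)$ in this basis lies in $\ZZ\subset\Lambda$, which is precisely the assertion that $S(\widetilde X)$ is extended from $\ZZ$. The only geometric input that deserves a comment is that the representing surfaces can actually be isotoped off the connect-sum necks into the interiors of the individual summands; this is routine since each $X_n^\Delta$ is open in $\widetilde X$ and $2$-cycles have codimension $2$, but it is the observation on which the whole computation rests.
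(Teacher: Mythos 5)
Your argument is correct, and it is essentially the standard justification the authors have in mind: the paper simply asserts this proposition as ``easy to see'' and gives no proof, while you supply the expected one --- the universal cover is the $\ZZ$-equivariant connected sum of $\RR\times S^3$ with copies $X^\Delta_n$, so $\pi_2(X)\cong \Lambda\otimes_{\ZZ}H_2(X^\Delta)$ and, in the basis coming from the $0$-th copy, the only nonzero coefficient of $S(\widetilde X)(\widetilde e_i,\widetilde e_j)$ is the $n=0$ term $\langle e_i,e_j\rangle_{X^\Delta}\in\ZZ$. The only cosmetic point is the convention in the defining sum (whether one writes $t^n$ or $t^{-n}$, reflecting the involution $\bar t=t^{-1}$), which does not affect the integrality conclusion.
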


\begin{proof}[Proof of Theorem \ref{non 1-dom}]
Let $A$
be the 4$\times$4 matrix over the ring $\Lambda$ as above and $N$ be the Hermitian form determined by $A$. 
 By Theorem \ref{HT2},
 $N$ is not extended from $\mathbb{Z}$, i.~e., there does not exist a bilinear form $N_1$ over $\mathbb{Z}$ such that $N\cong N_1\otimes_{\mathbb{Z}}\Lambda$. 
 
Applying \cite{FQ}, there exists a closed orientable 4-manifold $Y$ with $\pi_1(Y) \cong \mathbb Z$ whose equivariant intersection form $S(\widetilde Y)$ on $\pi_2(Y)$ is isomorphic to $N$. 
By the work of Freedman, there exists a simply connected closed 4-manifold $X_1$ such that its intersection form $I_{X_1}\cong I_Y$. Let $$X=X_1\#S^1\times S^3.$$ Then 
$$\pi_1(X)=\pi_1(X_1) * \pi_1(S^1\times S^3)=\pi_1(S^1 \times S^3)=\mathbb{Z}$$
and
$$I_{X}=I_{X_1}\oplus
I_{S^1\times S^3}=I_{X_1}\cong I_{Y}.$$
So the conditions (i) and (ii) are satisfied. Next we prove (iii). 

If not, there exists a degree 1 map $f:X\rightarrow Y$. Then $f$ induces a surjective map
$$f_{\pi_1}:\pi_1(X)\cong\mathbb{Z}\rightarrow\pi_1(Y)\cong\mathbb{Z}.$$
Since any surjection on $\mathbb{Z}$ is an injection, $f_{\pi_1}$ is an isomorphism. It's known that $\pi_2(X)$ is a  free $\Lambda$-module of rank $\beta_2(X)$ \cite[\S 3]{H}, i.~e., $\pi_2(X) \cong\Lambda^{\beta_2(X)}$ as an 
$\Lambda$-module, and so is $\pi_2(Y)\cong\Lambda^{\beta_2(Y)}$.
Since $I_X\cong I_Y$, we have $\beta_2(X)=\beta_2(Y)$. Since $f$ is a degree 1 map, it induces a surjective $\Lambda$-module homomorphism
$$f_{\pi_2}:\pi_2(X)\rightarrow \pi_2(Y)$$
preserving the Hermitian form. 
Since 
$$\pi_2(X)\cong\Lambda^{\beta_2(X)}\cong\Lambda^{\beta_2(Y)}\cong\pi_2(Y)$$ 
are finitely generated free $\Lambda$ modules of the same rank, $f_{\pi_2}$ is an isomorphism.
So $f_{\pi_2}$ is an isomorphism between the hermitian forms $S(\widetilde X)$ and $S(\widetilde Y)$. But
since $X=X_1\#S^1\times S^3$, the equivariant intersection form $S(\widetilde X)$ is extended from $\mathbb Z$, contradicting the fact that $N$ is not extended, thus  (iii) is proved. 
\end{proof}

\subsection{$\pi_1=\ZZ_n$: Connected sum decomposition and 1-domination}

Let $f \colon X \to Y$ be a degree $1$ map, then we have the condition  
$$I_X = I_Y \oplus L \ \ \text{and} \ \ I(X, \ZZ_2)=I(Y, \ZZ_2)\oplus L'\ \ ( \star).$$

Recall Remark \ref{no 2-torsion},  when $n$ is odd,  $I(X, \ZZ_2)=I(Y, \ZZ_2)\oplus L'$ follows from I$_X = I_Y \oplus L$, but for $n$ even this is an independent condition.

\begin{theorem}\label{1-dom n}
Suppose $X$, $Y$ are closed oriented  $4$-manifolds with $\pi_1(X)=\pi_1(Y)=\ZZ_n$. 
Then the condition $(\star)$  are sufficient conditions for the existence of a degree one map $ X \to Y$ unless $X$ is of type III and $Y$ is of type II for even $n$.
\end{theorem}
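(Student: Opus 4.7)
My plan is to prove, outside the stated exception, that $X$ is homeomorphic to a connected sum $Y\#Z$ for some closed simply connected topological $4$-manifold $Z$; then Lemma \ref{pinch}(1) supplies the required degree one map $X\to Y$ by pinching $Z$ to a point. The starting ingredient is Freedman's realization: given the orthogonal summand $L$ in $I_X=I_Y\oplus L$, there is a simply connected $Z$ with $I_Z=L$. When $L$ is odd both values of $ks(Z)\in\ZZ_2$ are available; when $L$ is even, $Z$ is forced to be spin with $ks(Z)=\sigma(L)/8\bmod 2$.

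To identify $Y\#Z$ with $X$ I invoke the Hambleton--Kreck classification (Theorem \ref{H-K}), matching four invariants. Two are automatic: $\pi_1(Y\#Z)=\pi_1(Y)=\ZZ_n$ since $Z$ is simply connected, and $I_{Y\#Z}=I_Y\oplus I_Z=I_X$. Matching $ks$ uses its additivity under connected sum: when $Z$ is non-spin I am free to pick $ks(Z)\in\{0,1\}$; when $Z$ is forced to be spin, the needed equality follows from the Rokhlin-type relation on manifolds with spin universal cover. The heart of the proof is matching the $w_2$-type, using the rules that $Y\#Z$ is spin iff both $Y$ and $Z$ are spin and, since $\widetilde{Y\#Z}\cong\widetilde Y\#(nZ)$, its universal cover is spin iff both $\widetilde Y$ and $Z$ are.

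The case analysis for $n$ even hinges on the parity facts that $I(\cdot,\ZZ_2)$ is odd iff the manifold is of Type I, and that Types II and III share an even $\ZZ_2$-form but are distinguished by whether $w_2$ vanishes or equals the ``Ext'' class pulled back from $H^2(\ZZ_n;\ZZ_2)$. In each case one chooses $Z$ so that $Y\#Z$ has the same $w_2$-type as $X$: for $X$ of Type I this is always possible (either $Y$ is of Type I and any $Z$ works, or $(\star)$ forces $L$ odd and a non-spin $Z$ turns $Y$ into Type I); for $X$ of Type II, $(\star)$ forces $L$ even so $Z$ is spin, and then $Y\#Z$ stays of the $w_2$-type of $Y$; for $X$ of Type III similarly $L$ is even and $Z$ spin, and the only realizable target is when $Y$ itself is of Type III. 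Precisely the combination $Y$ of Type II, $X$ of Type III cannot be realized this way, since connect-summing a spin $Y$ with a simply connected spin $Z$ remains spin and can never produce the Type III ``hidden'' $w_2$ class. The case $n$ odd is simpler, as only spin and non-spin occur and no such obstruction arises.

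The main obstacle is precisely this $w_2$-type case analysis: a simply connected $Z$ summand can neither create nor destroy the Ext-class $w_2$ that characterizes Type III, and this rigidity forces the stated exception. A secondary technical point is the Rokhlin-type relation needed to pin down $ks(Z)$ when $Z$ is forced to be spin, which one verifies from the facts that $\sigma(X)=\sigma(Y)+\sigma(L)$ and that $X$, $Y$ both have spin universal cover in the relevant sub-cases. Once the four invariants are matched in the non-exceptional cases, Theorem \ref{H-K} gives $X\cong Y\#Z$, and Lemma \ref{pinch}(1) completes the proof.
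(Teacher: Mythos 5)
Your plan proves a statement strictly stronger than the theorem, namely that $X$ is homeomorphic to $Y\#Z$ for some simply connected $Z$, and this stronger statement is false in general; the failure occurs exactly at your $ks$-matching step. When the complementary form $L$ is even, Freedman gives a \emph{unique} simply connected $Z$ with $I_Z=L$, and $ks(Z)=\sigma(L)/8$ is forced, so you need $ks(X)=ks(Y)+\sigma(L)/8 \pmod 2$. Your claim that this follows from a ``Rokhlin-type relation on manifolds with spin universal cover'' is not valid: the Freedman--Kirby congruence $ks\equiv\sigma/8$ (Lemma \ref{FK}) holds for spin manifolds (type II) but \emph{not} for almost spin (type III) manifolds --- Proposition \ref{RHS}(iii) exhibits $\Sigma_{1,0}$ and $\Sigma_{1,1}$, both of type III with $\sigma=0$ but with different $ks$, and they are even homotopy equivalent. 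Moreover $L$ can be even while $X$ and $Y$ are non-spin (odd $n$, or type I), where no congruence of any kind constrains $ks(X)-ks(Y)$. Concretely: take $Y=\Sigma_*\#\CC P^2$ (so $ks(Y)=0$) and $X=\Sigma_*\#M_0$, where $M_0$ is the simply connected manifold with $I_{M_0}=\langle 1\rangle\oplus E_8$ and $ks(M_0)=0$. Condition $(\star)$ holds with $L=E_8$, but any simply connected $Z$ with $I_Y\oplus I_Z\cong I_X$ must have $I_Z\cong E_8$ (since $\langle 1\rangle^9\not\cong\langle 1\rangle\oplus E_8$), hence $ks(Z)=1$ and $ks(Y\#Z)=1\neq 0=ks(X)$, so $X\not\cong Y\#Z$ by Theorem \ref{H-K}. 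Similarly $X=\Sigma_{1,1}\#(S^2\times S^2)$ and $Y=\Sigma_{1,0}$ satisfy $(\star)$ in the type III case, yet the only candidate is $Z=S^2\times S^2$ and again the $ks$ invariants disagree. In both examples the degree one map $X\to Y$ does exist, so the theorem is fine, but it cannot be produced by a homeomorphism $X\cong Y\#Z$; a degree one map is blind to $ks$, and your framework has no way to absorb this discrepancy.

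The repair is essentially the paper's argument: do not try to match $ks$ at all. Decompose \emph{both} manifolds by Theorem \ref{decomposition} as $X\cong\Sigma\#M$ and $Y\cong\Sigma'\#N$ with $\Sigma,\Sigma'$ rational homology $4$-spheres and $I_M=I_X$, $I_N=I_Y$; condition $(\star)$ and Proposition \ref{DW} (which requires no $ks$ hypothesis) give a degree one map $M\to N$; the rational homology sphere factors are handled by choosing the decompositions so that $\Sigma=\Sigma'$ (odd $n$, type II, and type I via the several decompositions in Theorem \ref{decomposition}(iv)) or by the homotopy equivalence $\Sigma_{1,0}\simeq\Sigma_{1,1}$ in the type III case; then Lemma \ref{pinch}(2) assembles the degree one map $X\to Y$. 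Your $w_2$-type bookkeeping, and your identification of why the case $X$ of type III, $Y$ of type II is exceptional, are correct, but the $ks$ step is a genuine gap that cannot be closed within the ``$X\cong Y\#Z$'' strategy.
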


The proof of Theorem \ref{1-dom n} is based on Theorem \ref{decomposition} which provides the decomposition of the manifolds $X$ with $\pi_1=\ZZ_n$ into a 
connected sum of a rational homology 4-sphere with $\pi_1=\ZZ_n$ and a simply connected 4-manifold. 
The following several lemmas are used in the proof of Theorem \ref{decomposition}.



There is an alternative description of the $w_2$-types. 

\begin{lemma}\label{Su} Suppose $X$ is a closed orientable  4-manifold
 with $\pi_1(X)=\ZZ_n$ and $n$ even. Then  $I_X$ is an even form if and only if $X$ is of type II or III.
\end{lemma}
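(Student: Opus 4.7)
My plan is to reformulate both conditions as statements about $w_2(X) \in H^2(X; \ZZ_2)$ --- using Wu's formula on one side and the universal cover $\pi \colon \widetilde X \to X$ on the other --- and then show the two reformulations coincide.

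By Wu's formula on the closed oriented $4$-manifold $X$, $x \cdot x \equiv \langle w_2(X), x\rangle \pmod{2}$ for every $x \in H_2(X; \ZZ_2)$. Writing $\rho \colon H_2(X; \ZZ) \to H_2(X; \ZZ_2)$ for mod-$2$ reduction, this says $I_X$ is even if and only if $w_2(X)$ annihilates $\mathrm{im}(\rho) \subseteq H_2(X; \ZZ_2)$. On the other hand, $X$ is of type II or III precisely when $\widetilde X$ is spin, i.e.\ $w_2(\widetilde X) = \pi^* w_2(X) = 0$; by the adjunction $\langle \pi^* w_2(X), \widetilde y\rangle = \langle w_2(X), \pi_* \widetilde y\rangle$ this is in turn equivalent to $w_2(X)$ annihilating the image of $\pi_* \colon H_2(\widetilde X; \ZZ_2) \to H_2(X; \ZZ_2)$.

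It therefore suffices to show $\mathrm{im}(\pi_*) = \mathrm{im}(\rho)$ as subgroups of $H_2(X; \ZZ_2)$. I will combine two surjectivity observations. First, the integer pushforward $\pi_* \colon H_2(\widetilde X; \ZZ) \to H_2(X; \ZZ)$ is surjective: under the isomorphisms $H_2(\widetilde X; \ZZ) \cong \pi_2(\widetilde X) \cong \pi_2(X)$ furnished by Hurewicz and covering space theory, this map becomes the Hurewicz map $\pi_2(X) \to H_2(X; \ZZ)$, which is surjective by Hopf's exact sequence $\pi_2(X) \to H_2(X; \ZZ) \to H_2(\pi_1(X); \ZZ) \to 0$ together with the vanishing $H_2(\ZZ_n; \ZZ) = 0$. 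Second, since $H_1(\widetilde X; \ZZ) = 0$, the universal coefficient theorem gives $H_2(\widetilde X; \ZZ_2) \cong H_2(\widetilde X; \ZZ) \otimes \ZZ_2$, so $\rho_{\widetilde X} \colon H_2(\widetilde X; \ZZ) \to H_2(\widetilde X; \ZZ_2)$ is surjective as well. Combining these with the naturality identity $\rho \circ \pi_* = \pi_* \circ \rho_{\widetilde X}$ gives $\mathrm{im}(\pi_*) = \pi_*(\rho_{\widetilde X}(H_2(\widetilde X; \ZZ))) = \rho(\pi_*(H_2(\widetilde X; \ZZ))) = \rho(H_2(X; \ZZ)) = \mathrm{im}(\rho)$, as required.

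I do not expect a genuine obstacle here; the only substantive point is the surjectivity of the integer pushforward $\pi_*$, and that rests entirely on the vanishing of $H_2(\ZZ_n; \ZZ)$ for finite cyclic groups via Hopf's exact sequence. Note that evenness of $n$ enters the lemma only indirectly, through the three-way classification of $w_2$-types; the argument above treats type II and type III uniformly as ``$\widetilde X$ is spin.''
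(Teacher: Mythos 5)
Your proof is correct, and it differs from the paper's in how the second half is handled. The paper's argument also begins with Wu's formula (evenness of $I_X$ is equivalent to $w_2(X)$ having trivial image in $\mathrm{Hom}(H_2(X),\ZZ_2)$), but then it simply cites \cite[Lemma 2.3]{HS} for the fact that type III means $w_2(X)\ne 0$ with vanishing image in $\mathrm{Hom}(H_2(X),\ZZ_2)$; together with type II meaning $w_2(X)=0$, this finishes the proof in two lines. You instead treat types II and III uniformly as ``$\widetilde X$ is spin'' and prove the needed equivalence directly: $\pi^*w_2(X)=0$ iff $w_2(X)$ annihilates $\mathrm{im}\bigl(\pi_*\colon H_2(\widetilde X;\ZZ_2)\to H_2(X;\ZZ_2)\bigr)$ (using that over the field $\ZZ_2$ cohomology is the full dual of homology), and then $\mathrm{im}(\pi_*)=\mathrm{im}(\rho)$ via surjectivity of the integral pushforward, which you get from Hopf's exact sequence and $H_2(\ZZ_n;\ZZ)=0$, plus surjectivity of mod-$2$ reduction on the simply connected cover. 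All of these steps check out, so in effect you have supplied a self-contained proof of the ingredient the paper imports from \cite{HS}; your route is longer but makes explicit exactly where the hypothesis on $\pi_1$ enters (only through $H_2(\pi_1;\ZZ)=0$, so the argument works verbatim for any fundamental group with vanishing second homology), whereas the paper's route is shorter at the cost of an external citation.
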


\begin{proof}
By Wu formula $I_X$ is even if and only if the evaluation of $w_2(X)$ on $H_2(X)$ is trivial. By \cite[Lemma 2.3]{HS}
$X$ is of type III if and only if $w_2(X) \ne 0$ and its image in $\mathrm{Hom}(H_2(X), \ZZ_2)$ is $0$.
\end{proof}

\begin{lemma}\label{FK} Suppose $X$ is a closed orientable  4-manifold.
Then $X$ is spin implies that $ks(X)=\sigma(X)/8 \pmod 2$.
\end{lemma}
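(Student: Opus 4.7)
My plan is to prove the congruence $ks(X)\equiv \sigma(X)/8 \pmod 2$ by treating both sides as additive bordism invariants of closed topological spin $4$-manifolds and matching them on generators. First, since $X$ is spin the intersection form $I_X$ is even, so by the classification of even unimodular forms $\sigma(X)\equiv 0\pmod 8$ and $\sigma(X)/8\in\ZZ$ is well-defined. Both $ks(-)\in\ZZ_2$ and $\sigma(-)/8 \pmod 2$ are additive under connected sum, and both vanish on boundaries of compact topological spin $5$-manifolds (the signature by the usual cobordism argument, and $ks$ by Kirby--Siebenmann obstruction theory, since on a $5$-manifold the Kirby--Siebenmann class restricts trivially to each boundary component). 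Thus they descend to homomorphisms from the topological spin bordism group $\Omega_4^{\Spin,\topo}$ to $\ZZ_2$.

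Next I would evaluate both invariants on two model manifolds. If $X$ admits a smooth (or PL) structure then $ks(X)=0$ by definition, and Rokhlin's theorem gives $\sigma(X)\equiv 0 \pmod{16}$, so both sides vanish modulo $2$. For Freedman's $E_8$-manifold $|E_8|$, which is a closed topological spin $4$-manifold realizing the form $E_8$, we have $\sigma(|E_8|)=8$ and $ks(|E_8|)=1$, so both sides equal $1\pmod 2$. The group $\Omega_4^{\Spin,\topo}$ is known to be infinite cyclic, generated by $|E_8|$ and detected by $\sigma/8$, so every closed topological spin $4$-manifold is spin-bordant to a multiple of $|E_8|$ and the two homomorphisms to $\ZZ_2$, agreeing on this generator, must coincide.

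The main obstacle is the bordism invariance of $ks$, equivalently the structural result that $\Omega_4^{\Spin,\topo}\cong\ZZ$ with generator $|E_8|$. This is not elementary and would be cited from Freedman--Quinn or Kirby--Siebenmann rather than reproved. If one wishes to avoid any appeal to topological bordism, a more hands-on alternative is to first reduce to the simply connected case and then apply Freedman's classification directly: an even form is realized by a smooth simply connected $4$-manifold (forcing $16\mid\sigma$ by Rokhlin) precisely when the $ks$-invariant of the realizing manifold is $0$, and by a non-smoothable one (with $ks=1$) otherwise, so the congruence is built into the simply connected classification; the general case then follows from additivity of both invariants under connected sum with a simply connected spin manifold realizing $I_X$.
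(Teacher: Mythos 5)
Your argument is correct, but it takes a genuinely different route from the paper. The paper disposes of this lemma with a one-line appeal to the Freedman--Kirby formula \cite{FK78}: in the topological category the characteristic-surface version of Rochlin's theorem reads $\sigma(X)\equiv F\cdot F+8\,\Arf(X,F)+8\,ks(X)\pmod{16}$, and taking $F=\emptyset$ for a spin $X$ gives $ks(X)\equiv\sigma(X)/8\pmod 2$ at once. You instead package both sides as homomorphisms on the topological spin bordism group $\Omega_4^{\Spin\topo}\cong\ZZ$ and compare them on the generator $|E_8|$; this is logically sound (bordism invariance of $ks$ via naturality of the Kirby--Siebenmann class and the null-homologous fundamental class of a boundary, bordism invariance of $\sigma$, $ks(|E_8|)=1$, $\sigma(|E_8|)=8$), and the check on smooth spin manifolds via Rokhlin is a harmless redundancy. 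What each approach buys: the paper's citation is shorter and self-contained at the level of \cite{FK78}, while yours makes transparent that the congruence is exactly the statement that $ks$ and $\sigma/8$ agree on the cyclic group $\Omega_4^{\Spin\topo}$; the cost is that the computation of $\Omega_4^{\Spin\topo}$ (from \cite{FQ} or \cite{KT}) is itself proved in the literature by arguments closely intertwined with this very congruence, so your route is a repackaging of cited input of comparable depth rather than a more elementary proof. One caveat on your fallback sketch: saying the congruence is ``built into the simply connected classification'' is essentially circular, since the assertion that the unique manifold realizing an even form has $ks\equiv\sigma/8$ \emph{is} the lemma in that case; and the reduction of a general spin $X$ to the simply connected case by connected sum would need an independent evaluation of $ks$ on a spin manifold of signature $0$, so the bordism argument should be regarded as your actual proof.
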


\begin{proof} By Freedman-Kirby formula \cite{FK78}.
\end{proof}

\begin{lemma}\label{spin}
Suppose there is a degree $1$ map $X \to Y$.

(1) If  $X$ is spin, then $Y$ is also spin.

Furthermore if $\pi_1(X)=\pi_1(Y)$, then

(2) If $X$ is almost spin, then $Y$ is almost spin or spin.
\end{lemma}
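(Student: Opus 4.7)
The plan is to handle (1) by a direct appeal to the $\mathbb{Z}_2$-intersection form splitting from Lemma \ref{splitting}, and then bootstrap (2) by lifting $f$ to the universal covers and reapplying (1).

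For part (1), I would invoke Lemma \ref{splitting}(ii), which yields an orthogonal decomposition $I(X,\ZZ_2) = I(Y,\ZZ_2) \oplus L'$. By Wu's formula, a closed orientable $4$-manifold $M$ is spin if and only if its $\ZZ_2$-intersection form is even, i.e.\ $\alpha \cdot \alpha = 0$ for every $\alpha \in H_2(M;\ZZ_2)$. Since $X$ is spin by hypothesis, $I(X,\ZZ_2)$ is even, and evenness is visibly inherited by orthogonal summands: for $\beta \in H_2(Y;\ZZ_2)$ the element $(\beta,0)$ in $I(X,\ZZ_2)$ satisfies $(\beta,0)\cdot(\beta,0) = \beta\cdot\beta = 0$. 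Hence $I(Y,\ZZ_2)$ is even, so $w_2(Y)=0$ and $Y$ is spin.

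For part (2), the strategy is to promote $f$ to a degree one map between the universal covers. Since $f$ has degree $1$, the induced map $f_* \co \pi_1(X) \to \pi_1(Y)$ is surjective. Under the hypothesis $\pi_1(X) \cong \pi_1(Y)$, and given that the cyclic groups relevant in this paper (finite cyclic, or $\ZZ$) are Hopfian, $f_*$ is in fact an isomorphism. Consequently the pullback of the universal cover $\widetilde Y \to Y$ along $f$ is precisely $\widetilde X \to X$, and $f$ lifts to an equivariant map $\widetilde f \co \widetilde X \to \widetilde Y$. Because $\widetilde f$ is the pullback of a degree one map along a covering, it is itself a degree one map (when $\pi_1$ is finite this can be checked by the formula $\deg(\widetilde f) \cdot |\pi_1(Y)| = |\pi_1(X)| \cdot \deg(f)$, giving $\deg(\widetilde f) = 1$).

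Now apply part (1) to $\widetilde f$: since $\widetilde X$ is spin (because $X$ is almost spin, by definition), $\widetilde Y$ is spin as well, i.e.\ $w_2(\widetilde Y) = 0$. By definition of the $w_2$-types for $\pi_1 = \ZZ_n$ with $n$ even, this forces $Y$ to be of type II (spin) or type III (almost spin), which is exactly the conclusion.

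The main technical point is ensuring the lift $\widetilde f$ exists and has degree one, which reduces to the Hopfian property of $\pi_1$ combined with the standard pullback-of-covers argument; once this is in place, part (2) is immediate from part (1). Everything else is a mechanical translation between the Wu-formula characterization of spin and the intersection-form splitting provided by Lemma \ref{splitting}.
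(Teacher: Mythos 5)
Your proposal is correct and follows essentially the same route as the paper: part (1) comes from the $\ZZ_2$-intersection form splitting of Lemma \ref{splitting} together with the Wu-formula characterization of spin (the paper phrases this via uniqueness of the characteristic element, which is equivalent to your evenness argument), and part (2) is the same lift of $f$ to a degree one map of universal covers followed by an application of (1). Your added details (Hopficity of cyclic groups and the degree formula for the lifted map) only flesh out steps the paper asserts implicitly.
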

\begin{proof}
Since $I(Y;\mathbb Z_2)$ is a direct summand of $I(X;\mathbb Z_2)$, the characteristic element $w_2(X)$ of $I(X;\mathbb Z_2)$ is also a characteristic element of $I(Y;\mathbb Z_2)$. By the uniqueness of characteristic elements $w_2(X)=0$ implies $w_2(Y)=0$. If $\pi_1(X)=\pi_1(Y)$, then $f$ induces an isormorphism on $\pi_1$ hence lifts to a degree one map $\widetilde f$ between the universal covers $\widetilde f \colon \widetilde X \to \widetilde Y$. Then $\widetilde X$ is spin implie $\widetilde Y$ is spin.
\end{proof}

A rational homology 4-sphere $\Sigma$ has vanishing integral intersection form $I_\Sigma$. It is known that  finite cyclic group can be realized as the $\pi_1$ of  rational homology 4-spheres. 
The next proposition will provide a classification of such realizations.

\begin{proposition}\label{RHS}
Suppose $\Sigma$ is a rational homology 4-sphere with  $\pi_1(\Sigma)=\ZZ_n$. 

(i) If $n$ is odd then $\Sigma$ is spin; if $n$ is even then $\Sigma$ is of $w_2$-type II or III.

(ii) For $n$ odd or $n$ even and $w_2$-type II, the homeomorphism type of $\Sigma$ is unique.

(iii) For $n$ even and $w_2$-type III, there are two homeomorphism types of $\Sigma$, which are homotopy equivalent.
\end{proposition}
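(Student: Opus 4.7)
The plan is to apply the Hambleton-Kreck classification (Theorem \ref{H-K}) to rational homology $4$-spheres $\Sigma$ with $\pi_1(\Sigma)\cong\ZZ_n$. First I would record the standard computation via Poincar\'e duality and the universal coefficient theorem: $H_1(\Sigma)\cong H_2(\Sigma)\cong\ZZ_n$, $H_3(\Sigma)=0$, and the integral intersection form $I_\Sigma$ is the rank-zero form. Since $\pi_1$ and $I_\Sigma$ are fixed, Theorem \ref{H-K} reduces the entire classification to determining the $w_2$-type and the Kirby-Siebenmann invariant $ks(\Sigma)\in\{0,1\}$.

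For part (i), when $n$ is odd the UCT yields
$$H^2(\Sigma;\ZZ_2)=\mathrm{Hom}(\ZZ_n,\ZZ_2)\oplus\mathrm{Ext}(\ZZ_n,\ZZ_2)=0,$$
which forces $w_2(\Sigma)=0$, so $\Sigma$ is spin. When $n$ is even, $I_\Sigma=0$ is trivially an even form, so Lemma \ref{Su} rules out type I and leaves types II and III as the only possibilities.

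For part (ii), in all the listed cases $\Sigma$ is spin, so by Lemma \ref{FK} we have $ks(\Sigma)\equiv\sigma(\Sigma)/8\equiv 0\pmod 2$. The quadruple $(\pi_1,I_\Sigma,w_2\text{-type},ks)$ is therefore completely pinned down, and Theorem \ref{H-K} immediately gives uniqueness of the homeomorphism type; existence of at least one such $\Sigma$ will be ensured by Proposition \ref{RHS-chi4}.

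For part (iii), in the type III case $\Sigma$ is non-spin so the Freedman-Kirby formula imposes no constraint on $ks(\Sigma)$, and Theorem \ref{H-K} allows at most two homeomorphism classes distinguished by $ks\in\{0,1\}$. To complete the argument I would (a) realize both values of $ks$ and (b) show that the two resulting manifolds are homotopy equivalent. Both statements come from the Freedman-Quinn topological star construction: starting from any type III rational homology $4$-sphere $\Sigma$ with $\pi_1=\ZZ_n$, the non-spin hypothesis permits forming $\Sigma^*$ homotopy equivalent to $\Sigma$ but with $ks(\Sigma^*)\neq ks(\Sigma)$; since the star construction is local (it alters a $4$-ball neighborhood), it preserves $\pi_1$, the zero intersection form, and the $w_2$-type. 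The main obstacle in the plan is precisely verifying this last point---that the star construction stays inside the type III rational homology $4$-sphere class---which reduces to a standard fact from topological $4$-manifold theory.
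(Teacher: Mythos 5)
Your treatment of parts (i) and (ii) is correct and is essentially the paper's own argument: rule out type I (and non-spin for odd $n$) because $\beta_2(\Sigma)=0$ forces the $\ZZ_2$-intersection data to be trivial/even (Lemma \ref{Su}), then in the spin cases use $\sigma(\Sigma)=0$ and Lemma \ref{FK} to pin down $ks(\Sigma)=0$ and conclude uniqueness from Theorem \ref{H-K}.

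Part (iii) has a genuine gap, and it sits exactly where you wave it away. The Freedman--Quinn star construction you invoke is not available under a bare ``non-spin'' hypothesis: it requires the \emph{universal cover} to be non-spin (totally non-spin, type I), since the construction modifies the manifold along a spherical class on which $w_2$ is nonzero. For a type III manifold $w_2$ vanishes on all spherical classes, so there is no such class, and whether a homotopy-equivalent partner with the opposite Kirby--Siebenmann invariant exists at all is precisely the delicate point --- this is why the almost-spin case required separate treatment in the literature, and why the paper does not cite \cite{FQ} here. The paper's proof (due to Hambleton) replaces the star construction by a surgery-theoretic argument: identify the topological normal invariants $\mathcal N^{\mathrm{top}}(\Sigma_{1,0})\cong H^2(\Sigma_{1,0};\ZZ_2)\oplus H^4(\Sigma_{1,0};\ZZ)$ \cite{Wall}, choose $x\in H^2(\Sigma_{1,0};\ZZ_2)$ with $x^2\ne 0$, use \cite[Proposition 7.4]{HMTW} to see that the surgery obstruction of the corresponding normal invariant vanishes (this is a nontrivial input depending on the finite fundamental group), and then conclude from \cite[p.~398]{KT} that the resulting homotopy equivalent manifold has the opposite $ks$. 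Your proposal contains no substitute for the obstruction-vanishing step, so the claim ``the non-spin hypothesis permits forming $\Sigma^*$'' is unjustified. By contrast, the point you flag as the ``main obstacle'' --- that the construction stays in the type III rational homology sphere class --- is automatic, since a homotopy equivalence preserves $\pi_1$, the homology, and (by Wu's formula and passage to universal covers) the $w_2$-type. Finally, note that part (iii) also asserts the \emph{existence} of two homeomorphism types; the paper gets this from \cite[Proposition 4.1]{HK2}, whereas your argument would still need at least one type III rational homology $4$-sphere with $\pi_1=\ZZ_n$ to start from, which Proposition \ref{RHS-chi4} does not provide (it gives no control on the $w_2$-type).
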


\begin{proof} Note first as a rational homology 4-sphere, $\beta_2(\Sigma)=0$ and $I_\Sigma$ is trivial, and $\sigma(\Sigma)=0$.

(i) If $n$ is odd, then $w_2(\Sigma)\ne 0$ implies $\beta_2(\Sigma)>0$.
If $n$ is even, then  $\Sigma$ is of type I still implies $\beta_2(\Sigma)>0$ by Lemma \ref{Su}.
Both cases are ruled out by that $\beta_2(\Sigma)=0$.
 
(ii) If $n$ is odd and $w_2=0$,  then $\Sigma$ is spin. Then by $\sigma(\Sigma)=0$ and Lemma \ref{FK}, $ks(\Sigma)=0$.
So $\Sigma$ is unique by Theorem \ref{H-K}, which will be denoted as $\Sigma_*$.
In the case of type II, by the same reason, $\Sigma$ is unique,  which will be denoted as $\Sigma_0$.

(iii) The existence of two homeomorphism types in $w_2$-type III is claimed in \cite[Prosition 4.1]{HK2}. Denote the manifolds by $\Sigma_{1,i}$ ($i=0,1$), with Kirby-Siebenmann invariant $ks(\Sigma_{1,i})=i$. The following argument (due to Ian Hambleton) shows they are homotopy equivalent. 
Take $x \in H^2(\Sigma_{1,0};\ZZ_2)$ with $x^2 \ne 0$ (such an $x$ exists since we are in $w_2$-type III). The set of normal invariants $\mathcal N^{\mathrm{top}}(\Sigma_{1,0})$ is identified with $H^2(\Sigma_{1,0};\mathbb Z_2) \oplus H^4(\Sigma_{1,0};\mathbb Z)$ (c.~f.~\cite{Wall}) The surgery obstruction for the surgery problem corresponding to $x$ is trivial (\cite[Proposition 7.4]{HMTW}), hence we get a closed manifold homotopy equivalent to $\Sigma_{1,0}$ with different Kirby-Siebenmann invariant (c.~f.~\cite[p.~398]{KT}), which is $\Sigma_{1,1}$.  
\end{proof}

Now we have a decomposition theorem for oriented $4$-manifolds with finite cyclic fundamental group. As a consequence of Proposition \ref{RHS} and Theorem \ref{H-K} this decomposition result could have independent interest. 

\begin{theorem}\label{decomposition}
Suppose $X$ is a closed orientable 4-manifold and $\pi_1(X)=\ZZ_n$. Then 
$X=\Sigma\# M,$  where $\Sigma$ is a rational homology sphere and $M$ is a  simply-connected closed 4-manifold, and $I_M=I_X$.
Precisely (following the notions in the proof of Proposition \ref{RHS}):

(i) $X=\Sigma_*\# M$ if $n$ is odd;

(ii) $X=\Sigma_0\# M$ if $X$ is of type II;

(iii) $X=\Sigma_{1, i}\# M$ if $X$ is of type III;

(iv) $X \cong \Sigma_0 \# M \cong \Sigma_{1,0} \# M \cong \Sigma_{1,1} \sharp M^*$ if $X$ is of type I,
where $M$ and $M^*$ have different Kirby-Siebenmann invariants.

In cases (i), (ii) and (iii), the decomposition is unique.
\end{theorem}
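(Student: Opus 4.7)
My plan is to use the Hambleton--Kreck classification (Theorem \ref{H-K}) as the main engine: for each case I will build an explicit candidate connected sum $\Sigma\# M$ and verify that its four classifying invariants---fundamental group, intersection form, $w_2$-type, and Kirby--Siebenmann invariant---agree with those of $X$, and then conclude $X \cong \Sigma\# M$.

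The construction goes as follows. First, invoke Freedman's realization of simply connected topological $4$-manifolds to produce a simply connected $M$ with $I_M = I_X$, choosing its Kirby--Siebenmann invariant appropriately (this choice is free when $I_X$ is odd, and is forced by Rokhlin's theorem together with Lemma \ref{FK} when $I_X$ is even). Next, choose $\Sigma$ from Proposition \ref{RHS}: take $\Sigma = \Sigma_*$ in case (i), $\Sigma_0$ in case (ii), and $\Sigma_{1,i}$ in case (iii), with $i$ determined below. For $\Sigma\# M$ the fundamental group is $\mathbb Z_n$ by van Kampen, and the intersection form is $I_M\oplus I_\Sigma = I_M = I_X$ since $\Sigma$ is a rational homology sphere; these checks are immediate.

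The crux is matching the $w_2$-type and $ks$. For cases (i)--(iii), $I_X$ is even, so $I_M$ is even, so $M$ is spin. In cases (i), (ii) also $\Sigma$ is spin, so $\Sigma\# M$ is spin, agreeing with $X$ being spin; and then $ks(M)$ is determined by $\sigma(M)=\sigma(X)$ via Lemma \ref{FK}, while $ks(\Sigma)=0$, so $ks(\Sigma\# M)=ks(M)=ks(X)$ and uniqueness of $\Sigma$ and $M$ follows. In case (iii), $w_2(\Sigma_{1,i})\neq 0$ but vanishes on $H_2$, while $w_2(M)=0$; since connected sum of an almost-spin $\Sigma_{1,i}$ with a spin simply connected $M$ has spin universal cover but nontrivial $w_2$, the result is type III as required. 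The index $i$ is then forced by $ks(X) \equiv i + \sigma(X)/8 \pmod 2$, giving uniqueness. In case (iv), $I_X$ is odd, so $M$ is non-spin, so $\widetilde{\Sigma\# M}$ contains a copy of $M$ and is non-spin: regardless of whether $\Sigma$ is spin or almost spin, $\Sigma\# M$ is totally non-spin. This gives three possible choices of $\Sigma$; to match $ks(X)$ we adjust $ks$ on the simply connected summand, using that for odd forms the Kirby--Siebenmann invariant of a simply connected realization is unconstrained, producing the manifold $M^*$ with $ks(M^*)\neq ks(M)$ needed for the $\Sigma_{1,1}$ decomposition. In all cases, once the four invariants agree, Theorem \ref{H-K} yields $X \cong \Sigma\# M$.

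The main obstacle I expect is the bookkeeping of $w_2$-types under connected sum, particularly the type III case: one must verify carefully that $w_2(\Sigma_{1,i}\# M)$ is nonzero on cohomology yet vanishes on the second homology of the universal cover, i.e., that the almost-spin condition is preserved rather than promoted to spin or demoted to totally non-spin. This uses the characterization in Lemma \ref{Su} and the description of type III in \cite[Lemma 2.3]{HS} used in its proof. The secondary subtlety is the uniqueness statement: in cases (i)--(iii) one must rule out the other candidate $\Sigma$'s and the other possible $ks(M)$, and the Rokhlin constraint from Lemma \ref{FK} together with additivity of $ks$ under connected sum does exactly this, whereas in case (iv) the failure of uniqueness is precisely the failure of this Rokhlin constraint, which is what enables the three listed decompositions.
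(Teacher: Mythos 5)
Your overall strategy is exactly the paper's: realize a simply connected $M$ with $I_M=I_X$ and a suitable $ks$, pick $\Sigma$ from Proposition \ref{RHS}, check that $\Sigma\# M$ matches $X$ in fundamental group, intersection form, $w_2$-type and Kirby--Siebenmann invariant, and invoke Theorem \ref{H-K}; your treatment of cases (ii), (iii) and (iv), including the choice $i\equiv ks(X)-\sigma(X)/8 \pmod 2$ and the $M$ versus $M^*$ bookkeeping, agrees with the paper's proof.

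However, there is a concrete error in your handling of case (i). You assert ``for cases (i)--(iii), $I_X$ is even, so $M$ is spin,'' and then match $w_2$-types by saying $\Sigma\# M$ is spin, ``agreeing with $X$ being spin.'' For odd $n$ this is false in general: Lemma \ref{Su} is stated only for $n$ even, and a manifold such as $\Sigma_*\#\mathbb{CP}^2$ has $\pi_1\cong\ZZ_n$ with $n$ odd and odd (hence non-even) intersection form, so $X$ in case (i) need not be spin. Your argument as written simply does not cover the non-spin subcase of (i): there the correct move (which the paper makes explicitly) is to take $M$ non-spin with $ks(M)=ks(X)$ --- a free choice since $I_X$ is odd --- and note that $\Sigma_*\# M$ is then non-spin, matching the only other $w_2$-type available when $n$ is odd; uniqueness again follows from additivity of $ks$ and $ks(\Sigma_*)=0$. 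This is an easy repair entirely within your framework, but as stated your case (i) is incomplete, and the Rokhlin/Lemma \ref{FK} constraint you cite for uniqueness there only applies in the spin subcase.
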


\begin{proof}
Let $M$ be a simply-connected closed $4$-manifold with intersection form $I_M=I_X$. Then $\sigma(M)=\sigma(X)$.

(i) Suppose first  $n$ is odd.

If  $X$ is  spin,  then  $I_X$ is an even form, therefore $M$ is a spin manifold. Since both $X$ and $M$ are spin and $\sigma(M)=\sigma(X)$,
we have $ks(M)=ks(X)$ by Lemma \ref{FK}.

If  $X$ is  non-spin, then  $I_X$ is an odd form hence $M$ is non-spin,  we further require that the Kirby-Siebenmann invariant of $M$ equals that of $X$. 
Then $X$ and $\Sigma_* \sharp M$ have the same intersection form, $w_2$-type and Kirby-Siebenmann invariant. Therefore we have a homeomorphism $X \cong \Sigma_* \sharp M$, where $M$ is uniquely determined by $X$. 

 Suppose then  $n$ is even. 

(ii) $X$ is of type II: By the same arguments as above, there is a homeomorphism $X \cong \Sigma_0 \sharp M$ by Theorem \ref{H-K}, where $M$ is uniquely determined by $X$.

(iii) $X$ is of type III:  By Lemma \ref{Su} $I_X$ is even, then $M$ is still spin. 
Since $\Sigma_{1,i}$ is of type III,  it is easy to see that $\Sigma_{1,i} \sharp M$ is still of type III.
By Lemma \ref{FK}, $ks(M)=\sigma(M)/8 \pmod 2$.
 Take $i =ks(X)-\mathrm{sign}(X)/8 \pmod 2$, 
then $X$ and  $\Sigma_{1,i} \sharp M$ have equal Kirby-Siebenmann invariant. Therefore  $X \cong \Sigma_{1,i} \sharp M$ by Theorem \ref{H-K},
where $M$ is uniquely determined by $X$.

(iv) $X$ is of type I: Let $M$ be the simply-connected $4$-manifold with $I_M = I_X$ and $ks(M)=ks(X)$, $M^*$ be the simply-connected $4$-manifold with $I_{M^*} = I_X$ and $ks(M^*)=1-ks(X) \pmod 2$. 
By Lemma \ref{Su}, $I_X$ is odd form. So both $I_M$ and $I_{M^*}$ are odd form, hence both $M$ and $M^*$ are type I by  Lemma \ref{Su}.
Then it is easy to see those 4-manifolds $X$, $\Sigma_0 \sharp M$,  $\Sigma_{1,0} \sharp M$, $\Sigma_{1,1} \sharp M^*$ have the type I,
 the same intersection forms and the same $ks$ invariant. 
Then there are homeomorphisms 
$$X \cong \Sigma_0 \sharp M \cong \Sigma_{1,0} \sharp M \cong \Sigma_{1,1} \sharp M^*$$
by Theorem \ref{H-K}.
\end{proof}

\begin{proof}[Proof of Theorem \ref{1-dom n}] Suppose $X$, $Y$ are as stated in Theorem \ref{1-dom n}.

By Theorem \ref{decomposition}, we have decompositions $X \cong \Sigma \sharp M$ and $Y \cong \Sigma' \sharp N$,
 where $\Sigma$ and $\Sigma'$ are appropriate rational homology spheres,  $M$ and  $N$ are closed simply connected 4-manifolds 
 with $I_M=I_X$ and $I_N=I_Y$. The condition ($\star$) implies that  $I_N$ is an orthogonal summand of $I_M$, therefore there is a degree $1$ map $M \to N$. 
 
(a) If $n$ is odd, then $\Sigma=\Sigma'=\Sigma_*$ by Theorem \ref{decomposition} (i).
 
 Below we suppose that $n$ is even.  The following cases are ruled out by Lemma \ref{spin}: $X$ is type II and $Y$ is either type I
 or III; and $X$ is type III and $Y$ is type I. So we have only the following several cases to discuss:
 
(b) Both $X$ and $Y$ are of  type II, then $\Sigma=\Sigma'=\Sigma_0$ by Theorem \ref{decomposition} (ii).  

(c)  Both $X$ and $Y$ are of type III, then $\Sigma \simeq \Sigma'$ by Theorem \ref{decomposition} (iii) and Proposition \ref{RHS} (iii).

(d) $X$  is of type I. Then  $X$  has three decompositions by Theorem \ref{decomposition} (iv),
and we may choose $\Sigma$ so that $\Sigma =\Sigma'$. 
  
In these cases we get a degree $1$ map $\Sigma \to \Sigma'$. Note also $M$ and $M^*$ are homotopy equivalent. So get a degree 1 map $X \to Y$ by Lemma \ref{pinch} (ii). \end{proof}

\begin{remark} The undecided  case in Theorem \ref{1-dom n}  is that $X$ is of type III and $Y$ is of type II.  We post
a concrete candidate to address. 

Let $\Sigma^2_{1,0}$ and $\Sigma^2_0$ be the rational homology spheres $\Sigma_{1,0}$ and $\Sigma_0$ with $\pi_1=\ZZ_2$ respectively. According to \cite[Remark 4.5]{HK1}: $$\Sigma^2_0=S(\eta \oplus \varepsilon^2)\ \ \mathrm{and} \ \ \Sigma^2_{1,0}=S(3\eta),$$
where $\eta$ is the non-trivial real line bundle over $\mathbb R \mathrm P^2$ and  $S(\eta \oplus \varepsilon^2)$ and  $S(3\eta)$ are   the sphere bundles of the vectors bundles $\eta \oplus \varepsilon^2$ and $3 \eta$ respectively. 
Let $X=\Sigma^2_{1,0} \# S^2\times S^2$ and $Y=\Sigma^2_0$. Then it is easy to see $X$ is of type III and $Y$ is of type
II. Indeed we also have $$I(X, \ZZ_2)=I(Y, \ZZ_2)\oplus H\qquad (*)$$
\begin{question}
Does $\Sigma^2_{1,0} \# S^2\times S^2$ 1-dominates $\Sigma^2_0$?
\end{question}

%

\end{remark}

\section{On finiteness of 1-domination when target manifolds with $\pi_1=\ZZ$ (resp. $\ZZ_n$)}

It is known that each closed orientable 3-manifold 1-dominates finitely many closed orientable 3-manifolds \cite{Liu}. 
We wonder  when the above result still hold in dimension 4.

\begin{theorem}\label{infinite cyclic}
For each positive integer $n$, there exists a closed orientable 4-manifold $Y$ which 1-dominates all closed orientable 4-manifolds $X$ with $\pi_1(X)=\ZZ$ and $\beta_2(X) \le n$. 
\end{theorem}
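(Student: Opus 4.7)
The plan is to reduce the problem to the simply connected setting via the Hambleton--Teichner splitting (Theorem \ref{HT1}), and then to exploit the classical finiteness of unimodular integral lattices of bounded rank to build a universal dominator by connected sum.

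First, given $X$ with $\pi_1(X)=\ZZ$ and $\beta_2(X)\le n$, I would form $X^{\ast}:=X\#(\#_3 S^2\times S^2)$. Then $\pi_1(X^\ast)=\ZZ$, and since $|\sigma(X)|\le \beta_2(X)$ we have $\beta_2(X^\ast)-|\sigma(X^\ast)|=\beta_2(X)+6-|\sigma(X)|\ge 6$. Therefore Theorem \ref{HT1} produces a simply connected closed $4$-manifold $X_1$ with $X^\ast\cong X_1\#(S^1\times S^3)$ and $\beta_2(X_1)=\beta_2(X)+6\le n+6$.

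Next, I would invoke the classical fact that up to isomorphism there are only finitely many unimodular symmetric bilinear forms over $\ZZ$ of rank at most $n+6$ (immediate in the indefinite case from the rank/signature/parity classification, and from Minkowski's finiteness of the number of classes in a genus in the definite case). Combined with Freedman's classification of simply connected topological $4$-manifolds by the pair $(I_M, ks(M))$, there are only finitely many homeomorphism classes of closed simply connected $4$-manifolds $M_1,\ldots,M_k$ with $\beta_2(M_i)\le n+6$. I would then set
$$Y_0:=M_1\# M_2\# \cdots \# M_k, \qquad Y:=Y_0 \#(S^1\times S^3).$$
For any admissible $X$, the associated $X_1$ is homeomorphic to some $M_i$. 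A single application of Lemma \ref{pinch}(1) yields a degree one map $Y_0\to M_i$; Lemma \ref{pinch}(2) (with the identity on $S^1\times S^3$) then gives a degree one map $Y\to M_i\#(S^1\times S^3)\cong X_1\#(S^1\times S^3)=X^\ast$; and Lemma \ref{pinch}(1) once more supplies a degree one map $X^\ast\to X$. Composition produces the desired degree one map $Y\to X$.

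The main obstacle is that the equivariant intersection forms $S(\widetilde X)$ over $\Lambda$, which by Theorem \ref{F-Q} carry the essential classifying data of $X$, admit infinitely many isomorphism types even at bounded rank (compare Theorem \ref{HT2}), so any direct enumeration of $X$'s with $\pi_1=\ZZ$ and $\beta_2\le n$ is out of reach. The role of the $\#_3 S^2\times S^3$ stabilization is precisely to kill this $\Lambda$-module complexity and collapse $X$ into the finite universe of simply connected targets, where the connected-sum trick succeeds.
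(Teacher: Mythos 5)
Your proposal is correct and follows essentially the same route as the paper: stabilize $X$ by $\#_3 S^2\times S^2$, split off $S^1\times S^3$ via Theorem \ref{HT1}, and take the universal dominator to be the connected sum of all simply connected $4$-manifolds with $\beta_2\le n+6$ (your finiteness argument via unimodular forms and Freedman is exactly the content of Proposition \ref{Free}) glued with $S^1\times S^3$. The only cosmetic difference is that you compose three explicit degree-one maps where the paper exhibits $Y\cong X\#(\#_3S^2\times S^2)\# N$ and pinches once.
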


The proof of Theorem \ref{infinite cyclic} needs Theorem \ref{HT1} and the following result follows from Freedman's work and a result in \cite{MH}.

\begin{proposition}\label{Free}
Given a positive integer $n$, there exist only finitely many closed simply-connected 4-manifolds with second betti number not greater than $n$.
\end{proposition}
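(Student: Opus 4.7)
The plan is to combine Freedman's topological classification of closed simply-connected $4$-manifolds with a classical finiteness result for unimodular integral symmetric bilinear forms found in Milnor--Husemoller \cite{MH}.

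First, recall that by Freedman's theorem, a closed simply-connected topological $4$-manifold $X$ is determined up to homeomorphism by its intersection form $I_X$ together with the Kirby--Siebenmann invariant $ks(X)\in \ZZ_2$; moreover every unimodular symmetric bilinear form over $\ZZ$ is realized, and for any fixed form the number of realizing homeomorphism types is at most $2$ (exactly one if the form is even, in which case $ks$ is already pinned down by $\sigma/8 \pmod 2$, and exactly two if the form is odd, distinguished by $ks$). Since the rank of $I_X$ equals $\beta_2(X)$, the proposition reduces to showing that for any fixed $n$ there are only finitely many isomorphism classes of unimodular integral symmetric bilinear forms of rank at most $n$.

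Next, I would invoke the finiteness theorem from \cite{MH} to conclude this counting statement. For indefinite forms this is immediate from the Hasse--Minkowski-type classification in \cite{MH}: an indefinite unimodular form is determined by its rank, signature, and type (even or odd), and there are clearly only finitely many such triples with rank $\le n$. For definite forms, one uses the fact (a consequence of the Minkowski bound on the mass formula, proved in \cite{MH}) that there are only finitely many positive (resp.\ negative) definite unimodular integral forms in each rank. Putting these two cases together gives finiteness of all unimodular integral forms of rank $\le n$.

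Combining the two inputs, each $\beta_2\le n$ forces $I_X$ to lie in a finite set of forms, and each such form is realized by at most two homeomorphism types; therefore the total number of homeomorphism types of closed simply-connected $4$-manifolds with $\beta_2\le n$ is finite. The only point that requires any care is the citation to the definite case in \cite{MH}, since the number of definite forms grows rapidly with the rank; but for each fixed rank this number is finite, which is all we need. This completes the plan.
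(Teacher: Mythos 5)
Your proposal is correct and follows essentially the same route as the paper, which deduces the proposition from exactly the two ingredients you use: Freedman's classification of closed simply-connected $4$-manifolds by the intersection form and the Kirby--Siebenmann invariant, together with the finiteness of unimodular integral symmetric bilinear forms of bounded rank from \cite{MH}. The extra detail you supply (indefinite forms classified by rank, signature and parity; definite forms finite in each rank; at most two homeomorphism types per form, one in the even case) is accurate and simply fleshes out what the paper cites.
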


\begin{proof}[Proof of Theorem \ref{infinite cyclic}] Fix a positive integer $n$.
Let $Y_1$ be the connected sum of all closed simply connected 4-manifolds with the second Betti number $\beta_2\le n+6$. \
By Proposition \ref{Free}, $Y_1$ is a closed orientable 4-manifold.

Then by Lemma \ref{pinch}, $Y_1$ one dominates every closed simply connected 4-manifold $Z$ with $\beta_2(Z)\le n+6$.
Let 
$$Y=Y_1\# S^1\times S^3.$$

Let $X$ be any closed orientable 4-manifold with $\pi_1(X)\cong \mathbb{Z}$ and $\beta_2(X)\le n$.
We will prove that $Y$ 1-dominates $X$. 
  Let $$X^*=X\#(\#_3S^2\times S^2).$$
  As we see in the proof of Theorem \ref{stably 1-dom}
  $$\pi_1(X^*)=\pi_1(X)*\pi_1(\#_3S^2\times S^2)\cong\mathbb{Z}.$$

and 
$$\beta_2(X^*)-|\sigma(X^*)|\ge 6.$$
So by Theorem \ref{HT1}, we have 
$$X^*\cong M\# S^1\times S^3$$ for some simply connected 4-manifold $M$.

Since 
$$\beta_2(X)=\beta_2(M)+\beta_2(S^1\times S^3) \,\, \text {and}\,\, \beta_2(S^1\times S^3)=0,$$ 
we obtain $$\beta_2(M)=\beta_2(X_1)=\beta_2(X)+6.$$
Since $$\beta_2(X)\le n,$$
we have 
$$\beta_2(M)\le n+6.$$ 

By the construction of $Y_1$, $M$ is a connected sum factor of $Y_1$. So
 $$Y_1\cong M\#N$$ for some 4-manifold $N$.
Then 
$$Y\cong Y_1\# S^1\times S^3$$
$$\cong M\#N\# S^1\times S^3$$
$$\cong (M\# S^1\times S^3)\#N $$
$$\cong X^*\#N $$$$\cong X\#(\#_3S^2\times S^2)\#N.$$
So $Y$ one dominates $X$.
\end{proof}

\begin{corollary}\label{3conditions}
The following statements are equivalent:

{\rm (i)} For each positive integer $n$, there exist only finitely many equivalence classes of non-singular hermitian forms over free modules over $\mathbb Z[\mathbb Z]$ of rank at most $n$.

{\rm (ii)} For each positive integer $n$, there are only finitely many closed 4-manifolds $X$ with $\pi_1(X)\cong\mathbb{Z}$ and $\beta_2(X)\le n$.

{\rm (iii)} Any  closed orientable 4-manifold $Y$ 1-dominates  only finitely many homeomorphism classes of closed orientable 4-manifolds with infinite cyclic  fundamental group.

\end{corollary}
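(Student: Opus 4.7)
The plan is to establish the equivalences via the two pairs (i) $\Leftrightarrow$ (ii) and (ii) $\Leftrightarrow$ (iii). The first equivalence is a translation through the Freedman-Quinn classification of Theorem \ref{F-Q}, the implication (ii) $\Rightarrow$ (iii) is a Betti-number bound coming from Lemma \ref{splitting}, and (iii) $\Rightarrow$ (ii) is obtained by feeding the universal dominator constructed in Theorem \ref{infinite cyclic} into the hypothesis (iii).

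For (i) $\Leftrightarrow$ (ii), the key input is that for a closed orientable 4-manifold $X$ with $\pi_1(X) \cong \ZZ$, the module $\pi_2(X) = H_2(\widetilde X)$ is free of rank $\beta_2(X)$ over $\Lambda$ (cf.\ \cite[\S 3]{H}), so the equivariant intersection form $S(\widetilde X)$ is a non-singular hermitian form on a free $\Lambda$-module of rank exactly $\beta_2(X)$. By Theorem \ref{F-Q} the assignment $X \mapsto S(\widetilde X)$ defines a surjection from the set of homeomorphism classes of such $X$ with $\beta_2(X) \le n$ onto the set of equivalence classes of non-singular hermitian forms on free $\Lambda$-modules of rank at most $n$, with each fiber of cardinality at most two (indexed by $ks(X) \in \{0,1\}$). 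Thus (i) and (ii) are finiteness conditions on two sets related by an at-most-two-to-one surjection, and are equivalent.

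For (ii) $\Rightarrow$ (iii), let $Y$ be a closed orientable 4-manifold and suppose $Y$ 1-dominates some $X$ with $\pi_1(X) \cong \ZZ$. By Lemma \ref{splitting}(i) with $F = \ZZ$ (or just rationally), $H_2(X)$ is a direct summand of $H_2(Y)$, so $\beta_2(X) \le \beta_2(Y)$. Hence every such target $X$ has $\beta_2$ bounded by the fixed integer $\beta_2(Y)$, and (ii) applied to $n = \beta_2(Y)$ delivers finiteness. For (iii) $\Rightarrow$ (ii), fix $n$ and take the closed orientable 4-manifold $Y$ provided by Theorem \ref{infinite cyclic}, which 1-dominates every closed orientable 4-manifold $X$ with $\pi_1(X) \cong \ZZ$ and $\beta_2(X) \le n$. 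By (iii) there are only finitely many homeomorphism classes of closed orientable 4-manifolds with $\pi_1 \cong \ZZ$ that $Y$ 1-dominates, so in particular only finitely many such $X$, which is (ii).

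I do not foresee a serious obstacle: the corollary is a bookkeeping exercise assembling Theorem \ref{F-Q}, Lemma \ref{splitting}, and Theorem \ref{infinite cyclic}. The one point that must be verified carefully is the identification of the $\Lambda$-rank of $\pi_2(X)$ with $\beta_2(X)$, so that the rank bound in (i) and the Betti bound in (ii) correspond cleanly under Freedman-Quinn; once this is in place the three implications follow in a few lines each.
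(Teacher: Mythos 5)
Your proof is correct and follows essentially the same route as the paper, which simply cites Theorem \ref{F-Q} for (i)$\Leftrightarrow$(ii) and Theorem \ref{infinite cyclic} for (ii)$\Leftrightarrow$(iii); your write-up just makes explicit the at-most-two-to-one correspondence $X \mapsto S(\widetilde X)$ and the Betti-number bound from Lemma \ref{splitting} that the paper leaves implicit.
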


\begin{proof}
The equivalence of (i) and (ii) follows from Theorem \ref{F-Q}. 

The equivalence of (ii) and (iii) follows from Theorem \ref{infinite cyclic}. 


\end{proof}

Since so far we do not know if Corollary \ref{3conditions} (i) holds, we do not know if each closed orientable 4-manifold 1-dominates only finitely many closed orientable 4-manifolds with infinite cyclic $\pi_1$. However we can prove

\begin{theorem}\label{cyclic-stable}
Every closed orientable 4-manifold $X$  1-dominates at most finitely many stable homeomorphism classes of closed orientable 4-manifolds $Y$ with $\pi_1(Y)=\mathbb{Z}$.
\end{theorem}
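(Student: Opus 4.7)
My strategy is to use the Hambleton--Teichner decomposition (Theorem \ref{HT1}) to reduce the stable classification of the targets $Y$ to the homeomorphism classification of a bounded family of simply-connected 4-manifolds, where Proposition \ref{Free} provides immediate finiteness. The first observation is that whenever $X$ 1-dominates $Y$, Lemma \ref{splitting} gives an orthogonal summand splitting $I_X=I_Y\oplus L$, hence the uniform bound $\beta_2(Y)\le\beta_2(X)$ (and therefore $|\sigma(Y)|\le\beta_2(Y)\le\beta_2(X)$) for every such $Y$.

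Following the pattern already used in the proof of Theorem \ref{stably 1-dom}, I would set $Y^{*}=Y\#(\#_{3}S^2\times S^2)$. Then $\pi_1(Y^{*})=\mathbb{Z}$ by Seifert--van Kampen, and a direct computation (using $\sigma(S^2\times S^2)=0$ and $|\sigma(Y)|\le\beta_2(Y)$) gives
$$\beta_2(Y^{*})-|\sigma(Y^{*})|=\beta_2(Y)+6-|\sigma(Y)|\ge 6.$$
Theorem \ref{HT1} therefore yields a homeomorphism $Y^{*}\cong M_Y\#(S^1\times S^3)$ for some closed simply-connected 4-manifold $M_Y$, and additivity of $\beta_2$ in connected sums (with $\beta_2(S^1\times S^3)=0$) gives the uniform bound $\beta_2(M_Y)=\beta_2(Y^{*})=\beta_2(Y)+6\le\beta_2(X)+6$. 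Proposition \ref{Free} then says that only finitely many homeomorphism classes of such $M_Y$ exist, hence only finitely many homeomorphism classes of $Y^{*}$ are realized. Since $Y_1^{*}\cong Y_2^{*}$ directly implies that $Y_1$ and $Y_2$ are stably homeomorphic (both differ from the same manifold by the same $\#_{3}(S^2\times S^2)$ summand), the assignment $Y\mapsto[Y^{*}]$ factors through stable homeomorphism classes and shows that the set of stable homeomorphism classes of 1-dominated $Y$'s is bounded above by a finite set.

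I do not anticipate a serious obstacle: the argument is essentially a packaging of Lemma \ref{splitting}, Theorem \ref{HT1}, and Proposition \ref{Free}. The only subtle point is that the same stabilization count (namely three copies of $S^2\times S^2$) must suffice uniformly for every $Y$ that $X$ 1-dominates; this uniformity is exactly what the bound $\beta_2(Y)\le\beta_2(X)$ from the first step provides, and is also what forces the bound $\beta_2(M_Y)\le\beta_2(X)+6$ needed to apply Proposition \ref{Free}.
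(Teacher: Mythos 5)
Your argument is correct and is essentially the same as the paper's proof: stabilize $Y$ by $\#_3 S^2\times S^2$, apply Theorem \ref{HT1} to split off $S^1\times S^3$, bound $\beta_2$ of the simply connected factor by $\beta_2(X)+6$ via Lemma \ref{splitting}, and invoke Proposition \ref{Free} to conclude finiteness of the possible $Y^*$ and hence of the stable homeomorphism classes of $Y$.
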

\begin{proof}
Suppose $X$ 1-dominates $Y$.  Then we know that
$$\beta_2(X) \ge \beta_2(Y).$$ 
Let 
$$Y^*=Y\#(\#_3S^2\times S^2).\qquad (3.1)$$
 Then as we see in the proof of Theorem \ref{stably 1-dom}
 $$\beta_2(Y^*)-|\sigma(Y^*)|\ge 6.$$
Then by Proposition \ref{HT1},  

$$Y^*\cong M\# S^1\times S^3 \qquad (3.2)$$ with $M$ a closed simply connected 4-manifold. 
Since $$\beta_2(Y^*)=\beta_2(M)+\beta_2(S^1\times S^3)=\beta_2(M),$$
 we get $$\beta_2(M)=\beta_2(Y)+6\le\beta_2(X)+6.$$
Since there are at most finitely many closed simply connected 4-manifold 
with second Betti number not greater than $\beta_2(X)+6$ by Proposition \ref{Free}, 
there are at most finitely many closed orientable  4-manifolds of the form $$M\#S^1\times S^3$$ 
with $M$ simply connected. Comparing (3.1) and (3.2), 
there are at most finitely many closed orientable  4-manifolds of the form
$$Y^*=Y\#(\#_3S^2\times S^2)$$
So $X$ 1-dominates at most finitely many stable homeomorphism classes of closed orientable 4-manifolds $Y$ with $\pi_1(Y)=\mathbb{Z}$.
\end{proof}

Even we do not know if each closed orientable 4-manifold 1-dominates only finitely many closed orientable 4-manifolds with $\pi_1=\ZZ$, it is easy to derive that each closed orientable 4-manifold 1-dominates only finitely many closed orientable 4-manifolds with $\pi_1=\ZZ_n$
for all positive integer $n$. Indeed we have

\begin{theorem}\label{finite-abelian}
Every closed orientable 4-manifold $X$ 1-dominates at most finitely many closed orientable 4-manifolds $Y$ with finite abelian $\pi_1$.
\end{theorem}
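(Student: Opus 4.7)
The plan is to apply the Hambleton--Kreck finiteness theorem (Theorem \ref{2}) after bounding both the Euler characteristic $\chi(Y)$ and the isomorphism type of $\pi_1(Y)$ in terms of $X$.

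First I would bound $\chi(Y)$. Since $X$ 1-dominates $Y$, Lemma \ref{splitting} gives a direct summand decomposition $H_i(X;\ZZ) = H_i(Y;\ZZ) \oplus G_i$ in each degree, so in particular $\beta_2(Y) \le \beta_2(X)$. Because $\pi_1(Y)$ is finite we have $\beta_1(Y) = 0$, and Lemma \ref{Euler} yields $\chi(Y) = 2 + \beta_2(Y) \le 2 + \beta_2(X)$.

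Next I would pin down the possible $\pi_1(Y)$. Since $\pi_1(Y)$ is finite abelian, $\pi_1(Y) \cong H_1(Y;\ZZ)$, which is again a direct summand of $H_1(X;\ZZ)$ as an abelian group by Lemma \ref{splitting}. Write the finitely generated abelian group $H_1(X;\ZZ) = \ZZ^{\beta_1(X)} \oplus T$, where $T$ is its (finite) torsion subgroup. Any finite direct summand of $H_1(X;\ZZ)$ must project trivially to the free part $\ZZ^{\beta_1(X)}$, hence lies inside $T$; a quick check then shows it is in fact a direct summand of $T$. As the finite group $T$ has only finitely many subgroups, $\pi_1(Y)$ takes only finitely many isomorphism types.

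Combining these two bounds, for each of the finitely many possible pairs $(\pi_1(Y), \chi(Y))$, Theorem \ref{2} produces only finitely many homeomorphism classes of $Y$, giving the claim. I do not anticipate a substantial obstacle: the one conceptual point to emphasize is that Lemma \ref{splitting} upgrades the obvious surjection $H_1(X) \twoheadrightarrow H_1(Y)$ coming from $f_*$ to an honest direct summand decomposition. This upgrade is essential, because a general finite abelian quotient of $\ZZ^{\beta_1(X)} \oplus T$ need not lie in $T$ and there would be infinitely many such quotients as soon as $\beta_1(X) \ge 1$ (for instance every $\ZZ_n$).
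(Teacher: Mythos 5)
Your proof is correct and follows essentially the same route as the paper: both use Lemma \ref{splitting} to bound $\chi(Y)\le 2+\beta_2(X)$ and to confine $\pi_1(Y)\cong H_1(Y)$ to (a summand/subgroup of) the finite torsion part of $H_1(X)$, then invoke Theorem \ref{2}. Your extra remark that the summand decomposition, rather than the mere surjection $H_1(X)\twoheadrightarrow H_1(Y)$, is what forces $\pi_1(Y)$ into $\mathrm{Tor}\,H_1(X)$ is a fair point, though already noting that a finite group consists of torsion elements suffices, exactly as in the paper.
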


\begin{proof}
Suppose $f: X\to Y$ is a map of degree one between closed orientable 4-manifolds. By Lemma \ref{splitting} (2) we have 
$$H_*(X)=H_*(Y)\oplus A.$$
In particular
 $$\beta_2(X)\ge \beta_2(Y)$$
 and 
$$\text{Tor}H_1(X)=\text{Tor}H_1(Y)\oplus A'.$$
Since $\pi_1(Y)$ is finite and abelian, we have 
$$\pi_1(Y)=H_1(Y)=\text{Tor}H_1(Y).$$
In particular $\pi_1(Y)$ is a subgroup of the finite abelian group $\text{Tor}H_1(X)$, it has only finitely many choices.

By the Hurewicz's theorem, $H_1(Y,\mathbb{Z})$ is isomorphic to the abelianization of $\pi_1(Y)$, and is hence finite.
So $\beta_1(Y)=0$. By the Poincare duality and  the Euler-Poincare formula (Lemma \ref{Euler}), we have 
$$\chi(Y)=2+\beta_2(Y).$$
We get $$2\le \chi(Y)\le 2+\beta_2(X).$$

So the pair $(\pi_1(Y), \chi(Y))$ has only finitely many choices.
For each choice of $(\pi_1(Y), \chi(Y))$,
by Theorem \ref{2},  there are only finitely many closed orientable 4-manifolds $Y$ realizing $(\pi_1(Y), \chi(Y))$. So there are only finitely many  closed orientable 4-manifolds $Y$ 1-dominated by $X$.
\end{proof}

\section{$1$-domination to manifolds with minimal Euler characteristic}

The 4-manifold Euler characteristic  for a finitely presented group $G$ is  defined by  
$$\chi_4(G)=\text{inf}\{\chi(X)| \text{$X$ is a closed orientable 4-manifold and $\pi_1(X)\cong G$}\},$$ 
and we have  

\begin{theorem}\label{1} \cite{Ko}, \cite{Hi1}, \cite{KL}, \cite{SunW1}. For a closed and orientable 3-manifold $Q$, we have 
$$\chi_4(\pi_1(Q))=2-2q,$$ 
where $q$ is the maximal rank of free group in the free product decomposition of $\pi_1(Q)$.
\end{theorem}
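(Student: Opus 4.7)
The plan is to establish matching upper and lower bounds on $\chi_4(\pi_1(Q))$.

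For the upper bound $\chi_4(\pi_1(Q)) \le 2-2q$, I would use the Kneser--Milnor decomposition to build an explicit 4-manifold. Write $Q = P_1 \# \cdots \# P_k \# (\#_q\, S^2 \times S^1)$ with each $P_i$ irreducible and not $S^2 \times S^1$, so that $\pi_1(Q) = \pi_1(P_1) * \cdots * \pi_1(P_k) * F_q$ is the Grushko decomposition with free rank $q$. For each $P_i$ let $P_i^\circ = P_i \setminus \mathrm{int}(B^3)$ and set
$$X_i = (P_i^\circ \times S^1) \cup_{S^2 \times S^1} (S^2 \times D^2),$$
gluing along the natural boundary. Van Kampen gives $\pi_1(X_i) = \pi_1(P_i)$ (the new $D^2$ kills the $S^1$-factor), and inclusion--exclusion gives $\chi(X_i) = 0 + 2 - 0 = 2$. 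Now take $X = X_1 \# \cdots \# X_k \# (\#_q\, S^1 \times S^3)$; van Kampen yields $\pi_1(X) = \pi_1(Q)$, and the connected-sum formula $\chi(A \# B) = \chi(A) + \chi(B) - 2$ applied inductively gives $\chi(X) = 2k + 0 - 2(k+q-1) = 2-2q$.

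For the lower bound $\chi(X) \ge 2-2q$ for every closed orientable 4-manifold $X$ with $\pi_1(X) = \pi_1(Q)$, the plan is to combine Hopf's theorem with Poincar\'e duality on $X$ and a cohomological-dimension argument on $B\pi_1(Q)$. Let $f\colon X \to B\pi_1(Q)$ be the classifying map; Hopf's theorem gives a surjection $f_*\colon H_2(X;\mathbb Q) \twoheadrightarrow H_2(\pi_1(Q);\mathbb Q)$, equivalently an injection $f^*\colon H^2(\pi_1(Q);\mathbb Q) \hookrightarrow H^2(X;\mathbb Q)$. Using $B\pi_1(Q) \simeq BG_1 \vee \cdots \vee BG_k \vee (\vee_q\, S^1)$, and the fact that $\beta_2(G_i) = \beta_1(G_i)$ for aspherical prime factors (by 3-dim Poincar\'e duality on $P_i$) while finite factors and $F_q$ contribute nothing rationally in degree $\ge 1$, one computes $\dim f^* H^2(\pi_1(Q);\mathbb Q) = \sum_i \beta_1(G_i) = \beta_1(\pi_1(Q)) - q$. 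The crucial observation is that for any $a, b \in f^* H^2(\pi_1(Q);\mathbb Q)$, the cup product $a \cup b$ lies in $f^* H^4(\pi_1(Q);\mathbb Q) = 0$, since each wedge summand has vanishing rational cohomology in degree $4$ (aspherical summands have cohomological dimension $\le 3$, finite summands are rationally acyclic above degree $0$, and $S^1$ above degree $1$). Hence $f^* H^2(\pi_1(Q);\mathbb Q)$ is a totally isotropic subspace of $H^2(X;\mathbb Q)$ with respect to the intersection form on $X$, and non-degeneracy of that form forces $\beta_2(X) \ge 2(\beta_1(\pi_1(Q)) - q)$. Combining with Lemma \ref{Euler} gives $\chi(X) = 2 - 2\beta_1(\pi_1(Q)) + \beta_2(X) \ge 2 - 2q$, as required.

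The main technical hurdle I expect is verifying $H^4(B\pi_1(Q); \mathbb Q) = 0$ cleanly across all three types of prime factors, and checking that cup products behave correctly under the wedge decomposition $B(A * B) \simeq BA \vee BB$. Once this is in hand, the totally-isotropic dimension count is purely formal, and the matching upper and lower bounds yield $\chi_4(\pi_1(Q)) = 2 - 2q$.
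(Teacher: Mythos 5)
Your proposal is correct, and it is worth noting that the paper itself does not prove Theorem \ref{1}: it is quoted from \cite{Ko}, \cite{Hi1}, \cite{KL}, \cite{SunW1}, and your argument is essentially the standard one in those references. The upper bound via $X_i=\partial(P_i^\circ\times D^2)=(P_i^\circ\times S^1)\cup_{S^2\times S^1}(S^2\times D^2)$ with $\chi(X_i)=2$, summed with $\#_q\,S^1\times S^3$, is the usual realization giving $\chi=2-2q$; the lower bound via Hopf's exact sequence, injectivity of $f^*\colon H^2(\pi_1(Q);\QQ)\to H^2(X;\QQ)$, vanishing of $H^4(B\pi_1(Q);\QQ)$, and the totally isotropic subspace bound $\beta_2(X)\ge 2(\beta_1(\pi_1(Q))-q)$ is exactly the Hausmann--Weinberger-type estimate used in the cited papers. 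Two points you leave implicit but should record: every prime summand other than $S^2\times S^1$ has fundamental group that is either finite or aspherical (sphere theorem plus the Hurewicz/Whitehead argument for irreducible pieces with infinite $\pi_1$), which is what justifies both the computation of $H^*(B\pi_1(Q);\QQ)$ from the wedge decomposition and the identification of $q$ with the number of $S^2\times S^1$ summands; and the degenerate case $Q=S^3$ (empty decomposition), where one simply takes $S^4$. With these remarks your matching bounds give $\chi_4(\pi_1(Q))=2-2q$ as claimed.
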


The following question is raised in \cite{SunW2}:

\begin{question}\label{q43} Suppose $Q$ is a closed orientable 3-manifold and $X$  is a closed orientable 4-manifold 
with $\pi_1(X)\cong \pi_1(Q)$. Is there a degree-$1$ map $f:X\to N$ for a closed orientable 4-manifold $N$ realizing $\chi_4(\pi_1(Q))$? 
\end{question}

Question \ref{q43} has a positive answer when $G$ is cyclic. 

\begin{theorem}\label{answer}
Suppose $G$ is a cyclic group and $X$ is a closed orientable 4-manifold with $\pi_1(X)\cong G$. Then $X$ $1$-dominates a 4-manifold $N$ realizing $\chi_4(G)$. 
\end{theorem}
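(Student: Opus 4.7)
The plan is to handle the three cases of cyclic $G$ separately and in each to produce an explicit model $N$ realizing $\chi_4(G)$ together with a degree one map $X \to N$. Using the Euler--Poincar\'e formula (Lemma \ref{Euler}) and the fact that $\beta_1(X) = \beta_1(G)$, one computes $\chi_4(\{e\}) = 2$ (realized by $S^4$), $\chi_4(\ZZ_n) = 2$ (realized by any rational homology $4$-sphere with $\pi_1 = \ZZ_n$, which exists by Proposition \ref{RHS}), and $\chi_4(\ZZ) = 0$ (realized by $S^1 \times S^3$, in agreement with Theorem \ref{1} applied to $Q = S^1 \times S^2$).

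The cases $G = \{e\}$ and $G = \ZZ_n$ are short. When $G$ is trivial, a pinch map $X \to S^4$ has degree one by Lemma \ref{pinch}(1). When $G = \ZZ_n$, Theorem \ref{decomposition} provides a homeomorphism $X \cong \Sigma \sharp M$ with $\Sigma$ a rational homology sphere having $\pi_1 = \ZZ_n$ and $M$ simply connected, so pinching $M$ produces a degree one map $X \to \Sigma$ onto a manifold realizing $\chi_4(\ZZ_n)$.

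The main case is $G = \ZZ$, since no unstable connected-sum splitting analogous to Theorem \ref{decomposition} is available (Theorem \ref{HT1} only applies after stabilization). Here I would construct a degree one map $f \co X \to S^1 \times S^3$ directly. Embed a locally flat oriented circle $\gamma \subset X$ representing a generator of $\pi_1(X) = \ZZ$; as an oriented rank-$3$ bundle over $S^1$ its normal bundle is trivial, yielding a tubular neighborhood $A \cong S^1 \times D^3$ with complement $B = X \setminus \mathrm{int}(A)$ and $\partial A = \partial B = S^1 \times S^2$. Define $f|_A(\theta, x) = (\theta, \psi(x))$, where $\psi \co D^3 \to S^3$ collapses $\partial D^3$ to a point $s_0 \in S^3$ and restricts to an orientation-preserving homeomorphism $\mathrm{int}(D^3) \to S^3 \setminus \{s_0\}$. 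Since $\pi_1(\partial A) \to \pi_1(A)$ is an isomorphism, Seifert--van Kampen gives $\pi_1(B) \cong \pi_1(X) = \ZZ$ with the $S^1$-factor of $\partial B$ mapping to a generator. Consequently a classifying map $g \co B \to S^1$ realizing the generator of $H^1(B)$ exists, and by the homotopy extension property I may arrange that $g|_{\partial B}$ equals the projection to the $S^1$-factor. Setting $f|_B(y) = (g(y), s_0)$ produces a continuous extension of $f|_A$, and counting preimages of a regular value $(p, q)$ with $q \neq s_0$ (which lies entirely inside $A$ and has a single preimage) shows $\deg f = \pm 1$; reversing orientation of $\gamma$ or of the $S^1$-factor of $S^1 \times S^3$ makes it $+1$. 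The only subtle point is matching the boundary behaviour of $g$, which is handled cleanly by homotopy extension.
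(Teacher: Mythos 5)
Your argument is correct, and for the crucial case $G=\ZZ$ it takes a genuinely different route from the paper. For $G=\ZZ_n$ (and the trivial group) you do exactly what the paper does: apply Theorem \ref{decomposition} to write $X\cong \Sigma\sharp M$ and pinch onto the rational homology sphere $\Sigma$, which has $\chi=2=\chi_4(\ZZ_n)$. For $G=\ZZ$, however, the paper reduces to Proposition \ref{Q^*} and proves it by stabilizing with $\#_3 S^2\times S^2$, invoking the Hambleton--Teichner splitting (Theorem \ref{HT1}) to split off an $S^1\times S^3$ summand, pinching, and then descending the map back to $X$ by attaching $3$-cells to the stabilizing spheres and using $\pi_2(S^1\times S^3)=0$; you instead build the degree one map $X\to S^1\times S^3$ directly, by collapsing the complement of a tubular neighborhood $A\cong S^1\times D^3$ of a circle generating $\pi_1(X)$ onto $S^1\times\{s_0\}$ via a classifying map $g\co B\to S^1$. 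Your van Kampen step is sound: since $\pi_1(\partial A)\to\pi_1(A)$ is an isomorphism, the pushout shows $\pi_1(B)\to\pi_1(X)$ is an isomorphism, so the boundary circle is a generator of $\pi_1(B)\cong\ZZ$ and (after fixing the sign of the generator of $H^1(B)$) $g|_{\partial B}$ is homotopic to the projection, which the homotopy extension property upgrades to equality; the local-degree count at a point off $S^1\times\{s_0\}$ then gives degree $\pm1$. What each approach buys: the paper's proof recycles machinery (Theorem \ref{HT1}) used elsewhere in the paper, while yours is unstable, self-contained, and avoids Hambleton--Teichner entirely, in effect giving a more elementary proof of Proposition \ref{Q^*}. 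One point you should make explicit, since the manifolds here are topological: the existence of a locally flat embedded circle representing the generator together with a tubular neighborhood $S^1\times D^3$ needs justification in the $\mathtt{Top}$ category --- e.g.\ by Quinn's theorem a connected topological $4$-manifold is smoothable away from a point, so the circle can be taken smooth with trivial normal bundle; with that remark added, the argument is complete.
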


\begin{proof}
(i)  Suppose $G=\ZZ$.   Since $\ZZ$ is the fundamental groups of closed orientable 3-manifolds $S^1\times S^2$, we have  $\chi_4(\ZZ)=0$ by Theorem \ref{1}. 

Suppose $N$ is a closed orientable 4-manifold realizing $\chi_4(\ZZ)$. Then we have  $\chi(N)=0$. Note  $\beta_1(N)=1$.
By the Poincare duality and the Euler-Poincare formula (Lemma \ref{Euler}), $$0=\chi(N)=2-\beta_1(N)+\beta_2(N)=\beta_2(X).$$
Hence $\beta_2(N)=0$. Since $\pi_1(N)=\ZZ$,  $N$ is homeomorphic to $S^1\times S^3$ by Theorem 2.5. 
Then Theorem \ref{answer} for $G=\ZZ$ follows from the following

\begin{proposition}\label{Q^*}
Suppose $M$ is a closed orientable 4-manifold with $\pi_1(M)=\ZZ$. Then  $M$ 1-dominates $S^1\times S^3$.
\end{proposition}
\begin{proof}
Let $$M_1=M\#(\#_3S^2\times S^2)\qquad(4.1).$$ 
As we see in Theorem \ref{stably 1-dom}
$$\pi_1(M_1)=\pi_1(M)*\pi_1(\#_3S^2\times S^2)\cong \mathbb{Z},$$
and
$$\beta_2(M_1)-|\sigma(M_1)|\ge 6.$$
By Proposition \ref{HT1},
$$M_1\cong S^1\times S^3\#M^\Delta\qquad(4.2)$$ for some closed simply connected 4-manifold $M^\Delta$. 
So there is a degree 1 map $$g:M_1\rightarrow S^1\times S^3.$$  by Lemma \ref{pinch}. 
Since $\pi_2(S^1\times S^3)=\pi_2(S^1)\times \pi_2(S^3)=0$, $g$ induces a trivial homomorphism on $\pi_2$: 
$$g_{\pi_2}:\pi_2(M_1)\rightarrow \pi_2(S^1\times S^3).$$
Let $\Sigma_1,\Sigma_1^*,\Sigma_2,\Sigma_2^*,\Sigma_3,\Sigma_3^*$
be six 2-spheres which are $S^2\times *$ and $*\times S^2$ in $\#_3S^2\times S^2$ in the connected sum decomposition (4.1). 
Attaching to each 2-sphere above a 3-cell via the attaching map
$$\phi_i:\partial D_i^3\rightarrow \Sigma_i\,\,\, \text{or}\,\,\, \phi_i^*:\partial D_i^{3*}\rightarrow \Sigma_i^*,$$
where each $\phi_i$ or $\phi_i^*$ is a homeomorphism, 
we obtain a CW-complex $M_2$. 
Note the space obtained by attaching two $3$-cells to $S^2\times S^2$  is homotopy equivalent  to $S^4$. 
It follows that $M_2$ is homotopy equivalent  to the connected sum of $M$ and three $S^4$, therefore  is homotopy equivalent to $M$.
 So there are two maps 
 $$h: M_2 \rightarrow M\,\,\, \text{ and}\,\,\,  h^*: M \rightarrow M_2$$ such that 
$h^*\circ h$ is homotopic to the identity of $M_2$ and $h\circ h^*$ is homotopic to the identity of $M$.  Let   $$\iota: M_1\to M_2$$ be the inclusion map. Since $g_{\pi_2}=0$,  
we can extend $g$ to 3-cells $D_i^3$ and $D_i^{3*}$, and then we get a map 
$$g_1: M_2\rightarrow S^1\times S^3$$
with
$g=g_1\circ\iota.$ Let 
$$g_2=h^*\circ\iota:M_1\to M_2\rightarrow M\,\,\, \text{and}\,\,\,  g_3=g_1\circ h:M\rightarrow  M _2 \to S^1\times S^3$$ be two composite maps. 
Then 
$$g_3\circ g_2=g_1\circ h\circ h^*\circ \iota: M_1\to M \to S^1\times S^3$$ is homotopic to $g=g_2\circ \iota$, so is a degree 1 map. It follows that $g_3$ is a degree 1 map from $M$ to $S^1\times S^3$. We finish the proof.
\end{proof}

(ii)  Suppose $G=\ZZ_n$.   Since $\ZZ_n$ is the fundamental groups of closed orientable 3-manifold, namely the lens space $L(n,m)$, we have  $\chi_4(\ZZ_n)=2$ by Theorem \ref{1}. 

Suppose $X$ is a closed orientable 4-manifold and $\pi_1(X)=\ZZ_n$. Then 
$X=\Sigma\# M$ by Theorem \ref{decomposition}, where $\Sigma$ a rational homology sphere and $M$ is   simply-connected.
So there is a degree one map $X\to \Sigma$. Since $\Sigma$ is a rational  homology sphere,
we have $\beta_1(\Sigma)=\beta_2(\Sigma)=\beta_3(\Sigma)=0$. Then by Euler-Poincare formula (Lamma \ref{Euler})
$$\chi(\Sigma)=2.$$
Hence $\Sigma$ realizes $\chi_4(\ZZ_n)=2$. 
This finishes the proof of Theorem \ref{answer}  for $G=\ZZ_n$
\end{proof}

\begin{remark}
Indeed Theorem \ref{decomposition} itself implies $\chi_4(\ZZ_n)=2$:  Suppose $X$ is a closed orientable 4-manifold and $\pi_1(X)=\ZZ_n$. Then 
$X=\Sigma\# M,$  where $\Sigma$ a rational homology sphere and $M$ is   simply-connected by Theorem \ref{decomposition}.
As we see that $\chi(\Sigma)=2$. Since $M$ is simply connected, we have $\beta_1(M)=\beta_3(M)=0$, then 
$\chi(M)=2+\beta_2(M)\ge 2$. So
$$\chi(X)=\chi(\Sigma\# M)= \chi(\Sigma)+\chi( M)-2=\chi( M)\ge 2.$$
That is to say $\chi_4(\ZZ_n)=2$.
\end{remark}

\end{document}